\Crefname{paragraph}{Section}{Sections}
\newcommand{\ensemblenombre}[1]{\mathbb{#1}}
\newcommand{\N}{\ensemblenombre{N}}
\newcommand{\R}{} 
\renewcommand{\R}{\ensemblenombre{R}}
\newcommand{\norme}[1]{\left\lVert#1\right\rVert}
\newcommand{\diam}{\mathrm{diam}}
\newcommand{\sign}{\mathrm{sign}}
\newcommand{\dive}[1]{\mathrm{div}}
\theoremstyle{plain} 
\newtheorem{prop}{Proposition}[section] 
\newtheorem{theo}[prop]{Theorem}
\newtheorem{lem}[prop]{Lemma}
\theoremstyle{definition}
\newtheorem{rmk}[prop]{Remark}
\newtheorem{conj}[prop]{Conjecture}
\let\original@addcontentsline\addcontentsline
\newcommand{\dummy@addcontentsline}[3]{}
\newcommand{\DeactivateToc}{\let\addcontentsline\dummy@addcontentsline}
\newcommand{\ActivateToc}{\let\addcontentsline\original@addcontentsline}
\begin{document}

\title{Exponential bounds for gradient of solutions to linear elliptic and parabolic equations}
\author{Kévin Le Balc'h}

\maketitle
\begin{abstract}
In this paper, we prove global gradient estimates for solutions to linear elliptic and parabolic equations. For a sufficiently smooth bounded convex domain $\Omega \subset \R^N$, we show that a solution $\phi \in W_0^{1,\infty}(\Omega)$ to an appropriate elliptic equation $\mathcal{L} \phi = F$, with $F \in L^{\infty}(\Omega;\R)$, satisfies $|\nabla \phi|_{\infty} \leq C |F|_{\infty}$, with a positive constant $C = \exp(C(\mathcal{L})\diam(\Omega))$. We also obtain similiar estimates in the parabolic setting. The proof of these exponential bounds relies on global gradient estimates inspired by a series of papers by Ben Andrews and Julie Clutterbuck. This work is motivated by a dual version of the Landis conjecture.
\end{abstract}
\small
\tableofcontents
\normalsize

\section{Introduction: The Landis conjecture as a motivation }

In the late 1960s, see \cite{KL88}, Evgeni Landis conjectured the following result.
\begin{conj}[“Qualitative” Landis conjecture]\label{conj:Landis}
For a potential $V \in L^{\infty}(\R^N)$, if $u$ is a solution to the Schrödinger elliptic equation 
\begin{equation}
\label{eq:ellipticSchro}
- \Delta u + V(x) u = 0\ \text{in}\ \R^N,
\end{equation}
and $u$ satisfies the exponential decay estimate
\begin{equation}
\label{eq:expdecayIntro}
\exists C, \varepsilon > 0,\ \forall x \in \R^N,\ |u(x)| \leq C \exp(-|x|^{1+\varepsilon}),
\end{equation}
then $$u \equiv 0.$$
\end{conj}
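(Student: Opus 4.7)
The strategy is to argue by contradiction via a \emph{quantitative unique continuation at infinity}: assuming $u \not\equiv 0$, one seeks a lower bound on $\int_{B(x_0,1)} |u|^2$ for $|x_0|$ large that is incompatible with the super-exponential decay \eqref{eq:expdecayIntro}.

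First I would establish a global Carleman estimate of the shape
\begin{equation*}
\tau^3 \int_{\R^N} e^{2\tau\varphi}|w|^2\,dx \;\leq\; C \int_{\R^N} e^{2\tau\varphi}|\Delta w|^2\,dx, \qquad \tau \geq \tau_0,
\end{equation*}
valid for compactly supported $w$ and a suitably chosen radial, convex weight $\varphi$ (typically built from a perturbed logarithm). Applying it to $w = \chi u$ with a cutoff $\chi$ supported in a dyadic annulus $\{R \leq |x| \leq 4R\}$, and using $\Delta u = V u$ together with $\tau \geq C|V|_\infty$, one absorbs the zeroth-order term on the right and obtains a quantitative three-annuli (Hadamard-type) inequality controlling the mass of $u$ on a middle shell by its mass on the inner and outer shells.

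I would then iterate this three-annuli inequality outward along scales $R_k \to \infty$: the outer term is bounded by $\exp(-R_k^{1+\varepsilon})$ thanks to the hypothesis, and optimising $\tau = \tau(R_k)$ in the weight produces a decay rate for $\int_{B(0,1)}|u|^2$ that cannot hold unless $u$ vanishes identically on $B(0,1)$. The classical strong unique continuation principle then propagates this to $u \equiv 0$ on $\R^N$.

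The hard part is, honestly, that Conjecture \ref{conj:Landis} is a celebrated \emph{open} problem. Bounded-potential Carleman estimates of the type above naturally permit $\tau$ to scale only like $R^{2/3}$ — the exponent behind Meshkov's complex-valued counterexample with $|u(x)| \lesssim \exp(-c|x|^{4/3})$ — so the plan above closes only for $\varepsilon > 1/3$, recovering the Bourgain--Kenig quantitative result. Reaching the sharp threshold $\varepsilon > 0$ requires genuinely new input and is presently known only in dimension two for real-valued $u$, via Kenig--Silvestre--Wang's quasiconformal approach. This gap is precisely the motivation for the dual formulation and the gradient estimates pursued in the present paper.
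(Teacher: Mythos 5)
There is a genuine gap, and you have named it yourself in your last paragraph: the statement labelled \Cref{conj:Landis} is an \emph{open conjecture}, the paper contains no proof of it (it is stated purely as motivation), and your proposal does not constitute one. The Carleman/three-annuli strategy you outline is precisely the Bourgain--Kenig route, and it is structurally blocked short of the conjectured threshold: to absorb the zeroth-order term after rescaling a ball of radius $R$ to unit size, the potential becomes $R^2 V$, and the $\tau^3$ gain in the Carleman inequality forces $\tau \gtrsim \|R^2V\|_{\infty}^{2/3} \sim R^{4/3}$, so the lower bound you can extract on annuli is of order $\exp(-CR^{4/3}\log R)$ and the iteration only contradicts decay of the form $|u(x)|\leq C\exp(-|x|^{4/3+\varepsilon})$, not \eqref{eq:expdecayIntro}. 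This is not a fixable technicality of your weight choice: your argument nowhere uses that $u$ and $V$ are real-valued, and as stated (complex-valued data allowed) the conjecture is actually \emph{false} by Meshkov's counterexample in $\R^2$ with decay $\exp(-c|x|^{4/3})$; hence any complex-linear Carleman argument that closed for every $\varepsilon>0$ would yield a contradiction. Reaching $\varepsilon>0$ must exploit real-valuedness (as in Kenig--Silvestre--Wang's quasiconformal argument in the plane with $V\geq 0$), which your sketch does not do.

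For comparison, what the paper actually proves around this statement is strictly weaker and uses a different mechanism: \Cref{th:LandisOneD} settles the one-dimensional case by a duality argument (solve the adjoint equation with right-hand side $\sign(u)$, control $\phi,\phi'$ by Gronwall, integrate by parts against $u$), and \Cref{cor:Landis} gives a weak quantitative $L^1$ unique continuation in any dimension under the sign condition $V\geq 0$, obtained by combining the same duality idea with the exponential gradient bound of \Cref{th:mainresult1}; this yields the qualitative conclusion $u\equiv 0$ under \eqref{eq:expdecayIntro} only when $V\geq 0$ (where it was already known via the maximum principle or Rossi's results). If you want a statement you can actually prove, you should either restrict to one of these settings or explicitly weaken the decay hypothesis to $\varepsilon>1/3$ (with the logarithmic refinement for real-valued $u$), and present your Carleman sketch as a proof of that weaker result, filling in the weight construction and the absorption and iteration steps quantitatively.
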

\indent The Landis' conjecture was latter disproved by Viktor Meshkov in 1991, see \cite{Mes91}, who constructed non-trivial complex-valued functions $u$ and $V$, satisfying \eqref{eq:ellipticSchro} in $\R^2$, with $V$ bounded and such that $|u(x)| \leq C \exp(-|x|^{4/3})$. He also proved the following qualitative unique continuation result: for $N\geq 1$, if $u$ satisfies \eqref{eq:ellipticSchro} with $V \in L^{\infty}(\R^N)$ and $|u(x)| \leq C \exp(-|x|^{4/3+\varepsilon})$ then $u \equiv 0$.\\
\indent The conjecture has been brought back to attention in $2000's$ by the works of Jean Bourgain and Carlos Kenig, see \cite{BK05}. In \cite{Ken06}, the author improves Meshkov's qualitative unique continuation result in the case of real-valued functions, pushing the decay condition up to $|u(x)| \leq C \exp(-|x|^{4/3} \log(|x|))$. However, there is not an analogue of Meshkov's counterexample for real-valued $u$ and $V$. This fact led Carlos Kenig to ask in \cite[Question 1]{Ken06} whether, for real-valued $u$ and $V$ satisfying \eqref{eq:ellipticSchro}, the condition \eqref{eq:expdecayIntro} necessarily implies $u \equiv 0$. This conjecture is still open nowadays, except for particular situations that we describe in the next paragraphs.\\
\indent In \cite{KSW15}, Carlos Kenig, Luis Silvestre and Jenn-Nan Wang prove “quantitative” Landis conjecture in the plane $\R^2$, assuming that $V \geq 0$. Observe that in this case the “qualitative” Landis conjecture is actually trivial by applying the maximum principle. The main contribution of \cite{KSW15} is the derivation of quantitative unique continuation for non-trivial solution $u$ to \eqref{eq:ellipticSchro}: If $|u(z)| \leq \exp( C |z|)$ then
\begin{equation}
\label{eq:quantitativeContinuation}
\exists R_0 > 0,\ \forall R \geq R_0,\ \inf_{|z_0|=R} \sup_{|z-z_0| < 1} |u(z)| \geq \exp (-C R \log(R)).
\end{equation}
Actually, they prove \eqref{eq:quantitativeContinuation} for solution to more general elliptic operators i.e. $\mathcal{L}u = -\Delta u - \nabla \cdot (W u) + V u$, with $V, W$ bounded, real-valued and $V \geq 0$. Note that in this case, $\mathcal{L}$ does not satisfy the maximum principle a priori, see \cite[ Chapter 8, Section 8.1]{GT83}. The article \cite{KSW15} actually leads to a series of papers \cite{KW15, DZ18, DZ19, Dav20, DW20}.\\
\indent In \cite{Ros18}, for a general elliptic operator $\mathcal{L}$, Luca Rossi proves the Landis conjecture with a sharp rate of decay in the following cases: $N=1$, or $u$ is radial or $\mathcal{L}$ is radial or $u \geq 0$ or $\lambda_1(\mathcal{L}) \geq 0$ where $\lambda_1(\mathcal{L})$ is the generalized principal eigenvalue of $\mathcal{L}$, see also the article \cite{ABG17} for a probabilistic proof to some of these previous results.\\
\indent In the one-dimensional case, we present an elementary proof of the Landis conjecture, relying on some kind of duality argument, communicated by Michel Pierre. This proof actually motivates our present work.
\begin{theo}
\label{th:LandisOneD}
Let $W, V \in L^{\infty}(\R;\R)$ and $u$ be a solution to 
\begin{equation}
\label{equ1D}
-u'' - (Wu) ' + V u = 0,
\end{equation}
satisfying the exponential decay \eqref{eq:expdecayIntro}. Then $u \equiv 0$.
\end{theo}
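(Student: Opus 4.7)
The strategy, hinted at in the paragraph preceding the theorem, is a duality argument: we test the equation for $u$ against a carefully chosen solution to the adjoint Dirichlet problem on a large interval, and use the super-exponential decay of $u$ to overwhelm the merely exponential growth of the gradient bound supplied by the paper's main theorem.

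\textbf{Step 1 (dual problem).} The formal adjoint of $\mathcal{L} = -\partial_x^2 - \partial_x(W\,\cdot\,) + V$ is $\mathcal{L}^* = -\partial_x^2 + W\partial_x + V$, an operator of the same class. Fix an arbitrary $F \in L^{\infty}(\R;\R)$ with support in $[-R_0,R_0]$. For each $R>R_0$, invoke the main elliptic theorem of the paper on the convex domain $\Omega_R=(-R,R)$ (of diameter $2R$) to produce $\phi_R \in W_0^{1,\infty}(\Omega_R)$ solving
\[
\mathcal{L}^*\phi_R = F \text{ in } (-R,R),\qquad \phi_R(\pm R)=0,
\]
together with the exponential gradient estimate $\|\phi_R'\|_{L^\infty(-R,R)} \leq e^{2C(\mathcal{L})R}\|F\|_{L^\infty}$.

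\textbf{Step 2 (integration by parts).} Multiplying \eqref{equ1D} by $\phi_R$, integrating over $(-R,R)$, and using $\phi_R(\pm R)=0$ to eliminate the terms involving $\phi_R$ at the boundary, the two integrations by parts leave only the terms involving $\phi_R'$ at the endpoints:
\[
0 = \int_{-R}^R (\mathcal{L}u)\phi_R\, dx = \int_{-R}^R u\,\mathcal{L}^*\phi_R\, dx + u(R)\phi_R'(R) - u(-R)\phi_R'(-R).
\]
Since $\mathcal{L}^*\phi_R=F$ and $\operatorname{supp}F\subset[-R_0,R_0]$, this rearranges to the duality identity
\[
\int_{-R_0}^{R_0} u F\, dx = u(-R)\phi_R'(-R) - u(R)\phi_R'(R).
\]

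\textbf{Step 3 (passing to the limit).} Combining the endpoint decay $|u(\pm R)| \leq C e^{-R^{1+\varepsilon}}$ with the gradient estimate of Step 1 yields
\[
\left| \int_{-R_0}^{R_0} uF\, dx \right| \leq 2Ce^{-R^{1+\varepsilon}}\, e^{2C(\mathcal{L})R}\|F\|_{L^\infty} = 2C \exp\!\bigl(2C(\mathcal{L})R - R^{1+\varepsilon}\bigr)\|F\|_{L^\infty}.
\]
Because $1+\varepsilon>1$, the exponent tends to $-\infty$ as $R\to\infty$, while the left-hand side is independent of $R$. Therefore $\int_\R uF\,dx = 0$ for every compactly supported $F\in L^\infty(\R;\R)$; taking for example $F=\operatorname{sign}(u)\mathbf{1}_{[-R_0,R_0]}$ forces $u\equiv 0$ on every bounded interval, and hence on $\R$.

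\textbf{Main obstacle.} The entire unique continuation statement is reduced, through this duality, to the quantitative gradient bound $\|\phi'\|_\infty \lesssim e^{C\,\diam(\Omega)}\|F\|_\infty$ for the Dirichlet problem associated with a first-order perturbation of $-\partial_x^2$ on a bounded convex domain. This is precisely the content of the paper's main theorem, so the real analytic work is shifted to establishing that estimate; the crucial feature it must have is that the constant in the exponent is linear in $\diam(\Omega)$, which is exactly what allows the super-exponential decay $e^{-R^{1+\varepsilon}}$ to win against $e^{CR}$ when $R\to\infty$.
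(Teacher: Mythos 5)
There is a genuine gap, and it sits precisely in your Step 1. The gradient bound you invoke, \Cref{th:mainresult1}, is stated (and proved) only for $V \in L^{\infty}(\Omega;\R^{+})$, i.e.\ under the sign condition $V \geq 0$, whereas in \Cref{th:LandisOneD} the potential $V$ is an arbitrary real-valued bounded function. The paper itself points out that for sign-changing $V$ the exponential bound \eqref{eq:GradElliptic} is an open problem, and the difficulty is real: without the sign condition even the solvability of the Dirichlet problem $\mathcal{L}^{*}\phi_R = F$, $\phi_R(\pm R)=0$, can fail (take $W=0$, $V \equiv -\pi^{2}/4$; then $0$ is a Dirichlet eigenvalue of $\mathcal{L}^{*}$ on $(-R,R)$ for infinitely many $R$), and near such resonances the solution operator admits no bound of the form $e^{CR}$ uniformly in $R$. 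So the estimate on which your whole limit argument rests is not available in the generality of the theorem; with the extra hypothesis $V\geq 0$ your argument is correct, but it then proves only the one-dimensional instance of \Cref{cor:Landis}, not \Cref{th:LandisOneD}.

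The paper's proof circumvents exactly this obstruction by never posing a boundary value problem: since in one dimension an elliptic operator is an evolution operator, it solves the Cauchy problem \eqref{eq:OdeSecond} with data $\phi(-R)=\phi'(-R)=0$, which is globally solvable by Cauchy--Lipschitz for any bounded $W,V$ (no sign condition), and Gronwall yields the exponential bound \eqref{eq:Estiphiphi'} on $|\phi|+|\phi'|$. The price is that the boundary terms at $x=+R$ now involve $\phi(R)$ and $\phi'(R)$ multiplied by $u(R)$ and $u'(R)$, so one needs in addition the super-exponential decay of $u'$, which the paper extracts from the equation itself via a Cauchy-criterion argument, see \eqref{eq:Estiu'}; the terms at $x=-R$ vanish by the initial conditions. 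If you want your duality scheme to cover sign-changing $V$, replacing your Dirichlet dual problem by this initial value problem (and adding the decay of $u'$) is the missing idea; as written, Step 1 does not go through.
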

\begin{proof}
\indent We introduce the function $\sign$
\begin{equation}
\forall s \in \R,\ \sign(s) = s\ \text{if}\ s \geq 0,\ -s \ \text{if}\ s < 0.
\end{equation}
\indent For the proof, we assume that $W$ and $V$ are continuous bounded functions but the arguments can be easily adapted to the more general case $W, V \in L^{\infty}(\R;\R)$.\\
\indent Let $R>0$ and let us solve the following evolution problem of second order
\begin{equation}
\label{eq:OdeSecond}
- \phi'' +  W \phi' + V \phi = \sign(u),\ \phi(-R) = \phi'(-R) = 0.
\end{equation}
By Cauchy-Lipschitz theorem, the solution to \eqref{eq:OdeSecond} is global.\\
\indent We rewrite \eqref{eq:OdeSecond} as an ordinary differential equation of first order
\begin{equation}
\label{eq:EDOPhi}
\Phi' = M \Phi + \Theta,\ \Phi(-R) = 0,
\end{equation}
where 
\begin{equation}
\Phi = \begin{pmatrix}
\phi\\ \phi'
\end{pmatrix}
,\ M = M(x) = \begin{pmatrix}
0& 1\\
V(x) & W(x)
\end{pmatrix},\
\Theta = \begin{pmatrix}
0\\ \sign(u)
\end{pmatrix}
\end{equation}
By Gronwall's estimate applied to \eqref{eq:EDOPhi}, we show that there exists $C>0$ such that
\begin{equation}
\label{eq:Estiphiphi'}
\forall x \in [-R, +\infty),\ |\Phi(x)| = |\phi(x)| + |\phi'(x)| \leq C \exp( C |x|).
\end{equation}
\indent Let $1 \leq x_1 \leq x_2 \in \R$. By using \eqref{equ1D}, \eqref{eq:expdecayIntro}, we have
\begin{align}
\left|u'(x_2) - u'(x_1) \right|= \left|\int_{x_1}^{x_2} u''\right| &=\left| \int_{x_1}^{x_2} - (W u)'(x) + V(x) u(x) dx \right|\notag\\
& \leq |W(x_2) u(x_2) - W(x_1) u(x_1)| + C \left|\int_{x_1}^{x_2} x e^{-x^{1+\varepsilon}} dx \right|\notag\\
& \leq |W(x_2) u(x_2) - W(x_1) u(x_1)| + C\left( e^{-x_1^{1+\varepsilon}} - e^{-x_2^{1+\varepsilon}}\right).\label{eq:CauchyCriteria}
\end{align}
This implies that $u'(x)$ admits a limit near $+ \infty$ by Cauchy's criteria and this limit is necessary $0$ because $\lim_{x \rightarrow + \infty} u(x) = 0$. Moreover, by passing to the limit $x_2 \rightarrow + \infty$ in \eqref{eq:CauchyCriteria}, we find that
\begin{equation}
\label{eq:Estiu'}
\forall x \in [1,+\infty),\ |u'(x)| \leq C \exp(-|x| ^{1+\varepsilon}).
\end{equation}
So by multiplying \eqref{eq:OdeSecond} by $u$ then integrate between $(-R,R)$ and integrate by parts, using \eqref{equ1D}, we find
\begin{align}
\int_{-R}^{R} |u| &= -\phi'(R) u(R) + \phi'(-R) u(-R) + \phi(R) u'(R) - \phi(-R) u'(-R) \notag \\
&+ W(R) u(R) \phi(R) - W(-R) u(-R) \phi(-R).
\label{eq:boundaryintegrate}
\end{align}
So by letting $R \rightarrow + \infty$ in \eqref{eq:boundaryintegrate}, using $\phi(-R) = \phi'(-R) = 0$, \eqref{eq:Estiphiphi'} and \eqref{eq:Estiu'}, we get
\begin{equation}
\int_{-\infty}^{+\infty} |u| = 0.
\end{equation}
Then $u \equiv 0$.
\end{proof}
The crucial point in the previous proof is the derivation of exponential bounds for the solution $\phi$ and its derivative $\phi'$ of the adjoint equation \eqref{eq:OdeSecond}. They are obtained in \eqref{eq:Estiphiphi'} by using classical Gronwall's estimate, remarking that in the one dimensional case, an elliptic operator can always be seen as an evolution operator.\\
\indent The main goal of the paper is to see to what extent we can extend these $L^{\infty}$ exponential bounds for solutions to elliptic equations and parabolic equations in the multi-dimensional case, see \Cref{th:mainresult1} and \Cref{th:mainresult2} below.

\section{Main results}

\subsection{Statement of the main results}

Let $\Omega$ be a smooth open bounded strictly convex domain of $\R^N$ with $N \geq 1$. We will use the notation $\diam(\Omega)$ for the diameter of $\Omega$, $\diam(\Omega) = \sup\{|y-x|\ ;\ x, y \in \Omega\}$.

The first main result of the paper is an estimation of the growth of the gradient of a solution to a linear elliptic equation, posed in $\Omega$, with homogeneous Dirichlet boundary conditions.

\begin{theo}
\label{th:mainresult1}
Let $W \in L^{\infty}(\Omega;\R^N)$, $V \in L^{\infty}(\Omega;\R^{+})$ and $F \in L^{\infty}(\Omega;\R)$. Let $\phi$ be the real-valued solution to the boundary elliptic problem
\begin{equation}
\label{eq:ellipticphi}
\left\{
\begin{array}{l l}
-\Delta \phi + W \cdot \nabla \phi + V \phi= F &\mathrm{in}\ \Omega,\\
\phi = 0&\mathrm{on}\ \partial \Omega.
\end{array}
\right.
\end{equation}
Then there exists an universal positive constant $C>0$ depending on $N$ such that 
\begin{equation}
\label{eq:GradElliptic}
\forall x \in \Omega,\ |\nabla \phi(x)| \leq \exp\Bigg(C\Big(1 + \norme{W}_{L^{\infty}} + \norme{V}_{L^{\infty}}^{1/2}\Big) \diam(\Omega)\Bigg) \norme{F}_{L^{\infty}} .
\end{equation}
\end{theo}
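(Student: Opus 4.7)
The plan is to first establish an $L^\infty$ bound on $\phi$ via a classical maximum principle with an explicit exponential barrier, and then upgrade it to a gradient estimate by means of a two-point maximum principle in the spirit of Ben Andrews and Julie Clutterbuck. For the $L^\infty$ step, set $d = \diam(\Omega)$, assume without loss of generality $\Omega \subset \{0 \leq x_1 \leq d\}$, and put $\alpha = 1 + \norme{W}_{L^\infty} + \norme{V}_{L^\infty}^{1/2}$. The barrier $\psi(x) = \norme{F}_{L^\infty}(e^{\alpha d} - e^{\alpha x_1})$ satisfies $-\Delta \psi + W \cdot \nabla \psi + V \psi \geq \norme{F}_{L^\infty}$ in $\Omega$ (using $V \geq 0$ and the algebraic identity $\alpha^2 - \alpha \norme{W}_{L^\infty} - \norme{V}_{L^\infty} = 1 + \norme{W}_{L^\infty} + 2\norme{V}_{L^\infty}^{1/2} + \norme{W}_{L^\infty}\norme{V}_{L^\infty}^{1/2} \geq 1$) and $\psi \geq 0$ on $\partial \Omega$, so the comparison principle applied to $\pm \phi - \psi$ yields $\norme{\phi}_{L^\infty} \leq \norme{F}_{L^\infty} e^{\alpha d}$.

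For the gradient bound, I introduce the two-point function $Z : \overline{\Omega} \times \overline{\Omega} \to \R$ defined by $Z(x,y) = \phi(y) - \phi(x) - 2\omega(|y-x|/2)$, where $\omega \in C^2([0, d/2])$ is an increasing concave function with $\omega(0) = 0$ and $\omega(d/2) \geq \norme{\phi}_{L^\infty}$, to be specified. If $Z \leq 0$ on $\overline{\Omega} \times \overline{\Omega}$, letting $y \to x$ along any unit vector $e$ gives $e \cdot \nabla \phi(x) \leq \omega'(0)$, hence $|\nabla \phi(x)| \leq \omega'(0)$. I would prove $Z \leq 0$ by contradiction: if $\sup Z > 0$, the maximum is attained at some $(x_*, y_*)$ with $s_* := |y_* - x_*|/2 > 0$. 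The case $x_*, y_* \in \partial \Omega$ immediately gives $Z = -2\omega(s_*) \leq 0$, contradicting the assumption. The mixed boundary case is ruled out using the convexity of $\Omega$: if, say, $x_* \in \partial \Omega$, then the inward-normal derivative of $Z$ in $x$ involves $e \cdot \nu$, where $\nu$ is the inward unit normal and $e = (y_*-x_*)/|y_*-x_*|$, which is non-negative by convexity, and a Hopf-type argument combined with the equation $\mathcal{L}\phi = F$ produces the desired contradiction. If $(x_*, y_*)$ is interior, the critical-point condition $\nabla_{x,y} Z = 0$ yields $\nabla \phi(x_*) = \nabla \phi(y_*) = \omega'(s_*) e$; testing the Hessian inequality $D^2 Z \leq 0$ with the pairs $(\mp e/2, \pm e/2)$ and with $(\tau, \tau)$ for each $\tau$ in an orthonormal basis of $e^\perp$, and summing, gives the sharp inequality $\Delta \phi(y_*) - \Delta \phi(x_*) \leq 2 \omega''(s_*)$, free of any singular $1/s_*$ term.

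Substituting the equation into the last inequality, using $\phi(y_*) - \phi(x_*) = 2\omega(s_*)$ together with $V \geq 0$, $\omega \geq 0$, $|V(y_*) - V(x_*)| \leq \norme{V}_{L^\infty}$ and $|\phi| \leq \norme{\phi}_{L^\infty}$, one obtains
\begin{equation*}
2\omega''(s_*) \geq -2\norme{W}_{L^\infty} \omega'(s_*) - \norme{V}_{L^\infty} \norme{\phi}_{L^\infty} - 2\norme{F}_{L^\infty}.
\end{equation*}
To contradict this, I would choose $\omega$ as the explicit solution of the linear ODE $\omega'' + \norme{W}_{L^\infty} \omega' = -\kappa$ on $(0, d/2]$ with $\kappa = \norme{F}_{L^\infty} + \tfrac{1}{2}\norme{V}_{L^\infty} \norme{\phi}_{L^\infty} + \varepsilon$ for small $\varepsilon > 0$, and with $\omega(0) = 0$, $\omega(d/2) = \norme{\phi}_{L^\infty}$. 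This $\omega$ is an explicit combination of an affine function and of $e^{-\norme{W}_{L^\infty} s}$; computing $\omega'(0)$ in terms of the data and substituting the $L^\infty$ bound $\norme{\phi}_{L^\infty} \leq \norme{F}_{L^\infty} e^{\alpha d}$ from the first step, one gets $\omega'(0) \leq \exp(C (1 + \norme{W}_{L^\infty} + \norme{V}_{L^\infty}^{1/2}) d)\, \norme{F}_{L^\infty}$, after absorbing polynomial prefactors in $\norme{W}_{L^\infty}$, $\norme{V}_{L^\infty}$ and $d$ into the exponential by enlarging $C$.

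The main obstacle I anticipate is the boundary analysis in the two-point maximum principle: rigorously ruling out mixed boundary maxima of $Z$ forces one to exploit the convexity of $\Omega$ through the non-negativity of $e \cdot \nu$, combined with a Hopf-type argument on $\phi$ near $\partial \Omega$. This is precisely the place where the smooth strict convexity of $\Omega$ stated in the theorem is essential, and where the Andrews--Clutterbuck machinery plays its decisive role; the interior Hessian computation, the choice of $\omega$, and the absorption of polynomial factors into the final exponential are comparatively routine.
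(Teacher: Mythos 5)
Your interior two-point computation (critical-point identities, Hessian test along $e$ and along the tangential directions, choice of a concave increasing modulus solving a linear ODE) is essentially the paper's argument, and your first-step barrier for $\norme{\phi}_{L^\infty}$ is sound. The genuine gap is the mixed boundary case of the two-point maximum principle, which is exactly the step you leave to ``convexity plus a Hopf-type argument''. With Dirichlet data the boundary condition only kills the tangential gradient of $\phi$: at a positive maximum of $Z$ with $x_*\in\partial\Omega$, the tangential first-order condition forces $e=(y_*-x_*)/|y_*-x_*|$ to be the inward normal $\nu$, and the one-sided normal condition then yields $\partial_\nu\phi(x_*)\ge\omega'(s_*)>0$ (inward normal derivative). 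Convexity, i.e. $e\cdot\nu\ge 0$, works against you here, and Hopf's lemma gives \emph{lower} bounds on normal derivatives of supersolutions, never upper bounds, so no contradiction follows; the sign mechanism you describe is the Neumann one (where $\partial_\nu\phi=0$), which is how the paper treats the Neumann case. For Dirichlet data the paper adds a separate ingredient: a barrier at each boundary point giving the pointwise decay $|\phi(x)|\le\varphi(\mathrm{dist}(x,\partial\Omega))$, see \eqref{eq:ponctboundphivarphi}, after which $Z(x_*,y_*)\le\varphi(|y_*-x_*|)-2\varphi(|y_*-x_*|/2)\le 0$ by monotonicity and concavity of $\varphi$. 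Your global bound $\norme{\phi}_{L^\infty}\le\norme{F}_{L^\infty}e^{\alpha d}$ cannot play this role: when $y_*$ is close to $x_*\in\partial\Omega$ the term $2\omega(s_*)$ is tiny while $\phi(y_*)$ is only known to be of size $\norme{\phi}_{L^\infty}$. What is needed, and what your proposal never constructs, is the linear-in-distance decay of $\phi$ near $\partial\Omega$ (equivalently a boundary gradient barrier).

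Two further points, both repairable but worth flagging. First, keeping $V$ inside the two-point ODE through the term $\norme{V}_{L^\infty}\norme{\phi}_{L^\infty}$ is a legitimate alternative to the paper's reduction to $V=0$ via the positive multiplier of \Cref{lem:multiplier}; however, your ``absorb polynomial prefactors'' step fails as stated: with only $\norme{\phi}_{L^\infty}\le\norme{F}_{L^\infty}e^{\alpha d}$ your forcing $\kappa$ contains the unpaired factor $\norme{V}_{L^\infty}$, which cannot be absorbed into $\exp(C\alpha\,\diam(\Omega))$ (take $\diam(\Omega)$ small and $\norme{V}_{L^\infty}$ huge with $\norme{V}_{L^\infty}^{1/2}\diam(\Omega)\sim1$); the same applies to the term $\norme{\phi}_{L^\infty}/d$ produced by imposing $\omega(d/2)=\norme{\phi}_{L^\infty}$, and to divisions by $\norme{W}_{L^\infty}$ when $W$ is small. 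You would need the sharper barrier constant, e.g. $\norme{\phi}_{L^\infty}\le\norme{F}_{L^\infty}(e^{\alpha d}-1)/\bigl(\alpha(1+\norme{V}_{L^\infty}^{1/2})\bigr)$, so that $\norme{V}_{L^\infty}\norme{\phi}_{L^\infty}\le\norme{F}_{L^\infty}(\alpha d)e^{\alpha d}$, and $\norme{W}_{L^\infty}+1$ in the ODE as in \eqref{eq:ellipticvarphi}. Second, prescribing the two endpoint values $\omega(0)=0$, $\omega(d/2)=\norme{\phi}_{L^\infty}$ does not by itself guarantee $\omega'>0$ on $[0,d/2]$; it is safer to prescribe $\omega'(0)=\lambda$ sufficiently large, as the paper does. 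None of this, however, substitutes for the missing Dirichlet boundary analysis, which is the real gap.
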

Before continuing, let us make some comments on \Cref{th:mainresult1}.
\begin{itemize}
\item For homogeneous Neumann boundary conditions on $\phi$, we can also prove \Cref{th:mainresult1}, but only for potential $V= 0$.
\item By using the homogeneous Dirichlet boundary conditions for $\phi$, it is easy to deduce from \eqref{eq:GradElliptic} the following estimate
$$\norme{ \phi}_{W_0^{1, \infty}(\Omega)}\leq \exp\Bigg(C\Big(1 + \norme{W}_{L^{\infty}} + \norme{V}_{L^{\infty}}^{1/2}\Big) \diam(\Omega)\Bigg) \norme{F}_{L^{\infty}} .$$
\item Let us remark that \eqref{eq:ellipticphi} is well-posed because of the sign condition on the potential $V$, according to \cite[Chapter 8, Section 8.2]{GT83}. From maximal elliptic regularity in $L^p$, $1 < p < +\infty$, see \cite[Chapter 9, Section 9.5, Theorem 9.11]{GT83} applied to the elliptic equation \eqref{eq:ellipticphi}, we already know that there exists a positive constant $C = C(\Omega, \norme{W}_{L^{\infty}},\norme{V}_{L^{\infty}}p)>0$ such that
\begin{equation*}
|\phi|_{W^{2,p}(\Omega)} \leq C |F|_{L^{\infty}(\Omega)}.
\end{equation*}
So by taking $p$ sufficiently large, using $W^{2,p}(\Omega) \hookrightarrow W^{1, \infty}(\Omega)$, see \cite[Chapter 7, Section 7.11]{GT83}, we obtain for some other positive constant $C = C(\Omega, \norme{W}_{L^{\infty}}, \norme{V}_{L^{\infty}})>0$, 
\begin{equation*}
|\phi|_{W^{1,\infty}(\Omega)} \leq C |F|_{L^{\infty}(\Omega)}.
\end{equation*}
The main interest of \eqref{eq:GradElliptic} is to make the constant $C$ explicit in function of the data of the problem, i.e. $\Omega$, $W$ and $V$. Moreover, the elliptic operator in \eqref{eq:ellipticphi} is of order two while $W \cdot \nabla \phi$ and $V \phi$ are respectively of order one and zero so \eqref{eq:GradElliptic} seems to be the good scaling as an ordinary differential argument would suggest.
\item When one look at the expected $L^{\infty}$-bound on the gradient of $\phi$, i.e. \eqref{eq:GradElliptic}, where $\phi$ is a solution to an elliptic equation, one can try to use Bernstein estimates, see \cite[Chapter 15, Section 15.1]{GT83}. Indeed, roughly speaking, Bernstein's idea consists in observing that the gradient of a solution to an elliptic equation satisfies also an elliptic equation itself then one can apply classical maximum principle. Unfortunately, this strategy can only be employed for sufficiently smooth coefficients in the elliptic equation, which is not a priori the case in \eqref{eq:ellipticphi}.
\item For $V \in L^{\infty}(\Omega;\R)$ without sign condition, if $\phi$ satisfies \eqref{eq:ellipticphi}, proving an exponential bound as \eqref{eq:GradElliptic} is actually an interesting open problem.
\end{itemize}

The second main result of the paper is an estimation of the growth of the gradient of a solution to a linear parabolic equation, posed in the parabolic cylinder $Q_T := (0,T)\times \Omega$ for $T>0$, with homogeneous Dirichlet boundary conditions on the parabolic boundary $\Sigma_T := (0,T)\times\partial\Omega$.

\begin{theo}
\label{th:mainresult2}
Let $W \in L^{\infty}(\Omega;\R^N)$, $V \in L^{\infty}(\Omega;\R)$, $F \in L^{\infty}(Q_T;\R)$ and $\phi_0 \in W_0^{1, \infty}(\Omega)$. Let $\phi$ be the real-valued solution to the parabolic equation
\begin{equation}
\label{eq:parabolicphi}
\left\{
\begin{array}{l l}
\partial_t \phi -\Delta \phi + W \cdot \nabla \phi + V \phi= F &\mathrm{in}\ Q_T,\\
\phi = 0&\mathrm{on}\ \Sigma_T,\\
\phi(0,\cdot) = \phi_0 &\mathrm{in}\ \Omega.
\end{array}
\right.
\end{equation}
Then there exists an universal positive constant $C>0$ depending on $N$ such that for every $t \geq 0$, 
\begin{align}
\notag
&\norme{\nabla \phi(t,\cdot)}_{\infty}\\
&\leq \exp\Bigg(C\left(T \norme{V}_{\infty}+\Big(1 + \norme{W}_{\infty} + \norme{V}_{\infty}^{1/2}\Big) \diam(\Omega)\right)\Bigg) \bigg(\norme{\nabla \phi_0}_{\infty} +  \norme{F}_{\infty} \bigg).\label{eq:GradParabolic}
\end{align}
\end{theo}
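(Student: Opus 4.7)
The plan is to reduce the parabolic problem to the setting of a nonnegative potential via an exponential gauge transform, and then run a two-point maximum principle in the spirit of Ben Andrews and Julie Clutterbuck to produce a modulus of continuity bounding the gradient.

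First, I would set $\psi(t,x):=e^{-\norme{V}_{\infty}t}\phi(t,x)$. A direct computation shows that $\psi$ solves the same Dirichlet parabolic problem as $\phi$, with potential $\widetilde V := V+\norme{V}_{\infty}\in[0,2\norme{V}_{\infty}]$, initial datum $\phi_0$, and source $\widetilde F(t,x):=e^{-\norme{V}_{\infty}t}F(t,x)$, whose $L^{\infty}$-norm is bounded by $\norme{F}_{\infty}$. Since $\norme{\nabla\phi(t,\cdot)}_{\infty}\le e^{T\norme{V}_{\infty}}\norme{\nabla\psi(t,\cdot)}_{\infty}$, the factor $e^{CT\norme{V}_{\infty}}$ in \eqref{eq:GradParabolic} absorbs this rescaling, and it suffices to establish the estimate for $\psi$ in the case $\widetilde V\ge 0$, dropping the $T\norme{V}_{\infty}$ term in the exponent.

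For the reduced problem I would introduce an auxiliary modulus $\omega:[0,T]\times[0,\diam(\Omega)/2]\to\R_+$, concave and strictly increasing in its spatial argument with $\omega(t,0)=0$, and study the two-point function
\begin{equation*}
Z(t,x,y):=\psi(t,x)-\psi(t,y)-2\,\omega\!\left(t,\tfrac{|x-y|}{2}\right) \qquad \text{on } [0,T]\times\bar\Omega\times\bar\Omega.
\end{equation*}
The aim is to propagate $Z\le 0$ by the parabolic maximum principle. At a would-be first positive interior maximum $(t_\ast,x_0,y_0)$, the first-order conditions align $\nabla_x\psi(t_\ast,x_0)$ and $-\nabla_y\psi(t_\ast,y_0)$ along $(x_0-y_0)/|x_0-y_0|$ with common modulus $\partial_s\omega$, while the second-order condition in the $2N$ variables $(x,y)$ controls $\Delta_x\psi(t_\ast,x_0)-\Delta_y\psi(t_\ast,y_0)$ from above by $2\,\partial_{ss}\omega$ plus lower-order curvature terms. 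Substituting into the parabolic equation written for $\psi(t,x)-\psi(t,y)$ and using $\widetilde V\ge 0$ yields a differential inequality that is violated provided $\omega$ is chosen as the solution on $[0,\diam(\Omega)/2]$ of a one-dimensional linear parabolic problem of the form
\begin{equation*}
\partial_t\omega-2\,\partial_{ss}\omega+\norme{W}_{\infty}\,|\partial_s\omega|+2\norme{V}_{\infty}\,\omega=\norme{F}_{\infty},
\end{equation*}
with boundary values $\omega(t,0)=0$ and $\omega(t,\diam(\Omega)/2)=\norme{\psi}_{\infty}$, and initial profile $\omega(0,s)$ dominating $\norme{\nabla\phi_0}_{\infty}\,s$. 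The $L^{\infty}$-bound on $\psi$ needed to fix the right boundary value is supplied separately by the usual parabolic maximum principle. Letting $y\to x$ in $Z\le 0$ then yields $|\nabla\psi(t,x)|\le\partial_s\omega(t,0)$.

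Finally, I would bound $\partial_s\omega(t,0)$ by a direct analysis of this one-dimensional linear parabolic problem, essentially a time-dependent version of the ODE computation underlying Theorem~\ref{th:mainresult1}, obtaining $\partial_s\omega(t,0)\le e^{C(1+\norme{W}_{\infty}+\norme{V}_{\infty}^{1/2})\diam(\Omega)}(\norme{\nabla\phi_0}_{\infty}+\norme{F}_{\infty})$. Composing with the factor $e^{T\norme{V}_{\infty}}$ from the first step gives \eqref{eq:GradParabolic}. The main obstacle is the boundary analysis of $Z$ on $[0,T]\times\partial\Omega\times\bar\Omega$: when one argument of $Z$ lies on $\partial\Omega$ and the other in the interior, the Dirichlet condition reduces $Z$ to $\pm\psi(t,\cdot)$ minus a modulus, so one must ensure that $\omega$ dominates $|\psi|$ at boundary points using the convexity of $\Omega$ and the a priori $L^{\infty}$-bound on $\psi$; achieving this domination uniformly in $t\in[0,T]$ without spoiling the target exponential rate is the delicate balance.
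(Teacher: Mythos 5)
Your reduction to a nonnegative potential via the gauge $e^{-\norme{V}_{\infty}t}$ matches the paper, and a time-dependent modulus in the Andrews--Clutterbuck style would indeed work for $V=0$; but the step where you keep $\widetilde V\ge 0$ inside the two-point argument has a genuine gap. At a first interior touching point $(t_*,x_0,y_0)$ one has $\psi(t_*,x_0)-\psi(t_*,y_0)=2\omega$, and the zeroth-order contribution to $\partial_t\bigl(\psi(t_*,x_0)-\psi(t_*,y_0)\bigr)$ is
\begin{equation*}
-\widetilde V(x_0)\psi(x_0)+\widetilde V(y_0)\psi(y_0)
=-2\,\widetilde V(x_0)\,\omega-\bigl(\widetilde V(x_0)-\widetilde V(y_0)\bigr)\psi(y_0).
\end{equation*}
The first piece is signed (this is all that $\widetilde V\ge 0$ buys you), but the cross term $\bigl(\widetilde V(x_0)-\widetilde V(y_0)\bigr)\psi(y_0)$ is only bounded by $2\norme{\widetilde V}_{\infty}\norme{\psi}_{\infty}$: since $\widetilde V$ is merely $L^{\infty}$ it has no modulus of continuity, and $|x_0-y_0|$ may be arbitrarily small, so this term is not controlled by $\omega$ or by the $2\norme{V}_{\infty}\omega$ damping in your one-dimensional problem, and the claimed contradiction with $\partial_t Z\ge 0$ does not follow. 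Patching this by adding a source of size $\norme{V}_{\infty}\norme{\psi}_{\infty}$ to the $\omega$-equation closes the maximum principle but destroys the claimed constant: the resulting bound carries a factor $\norme{V}_{\infty}\,\mathrm{diam}(\Omega)$ (and $\norme{\psi}_{\infty}$-, hence $T$-, dependence) rather than the $\norme{V}_{\infty}^{1/2}\,\mathrm{diam}(\Omega)$ scaling in \eqref{eq:GradParabolic}, and nothing in your scheme produces that square root.

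The missing idea is the one the paper uses precisely to avoid this term: after the $e^{-t\norme{V}_{\infty}}$ gauge, do not keep the potential in the two-point function at all, but divide by the time-independent positive elliptic multiplier $\psi$ of \Cref{lem:multiplier} (Kenig--Silvestre--Wang type). Then $\widehat\phi=\phi/\psi$ solves a parabolic equation with zero potential and drift $\widehat W=W-2\nabla\log\psi$ satisfying $\norme{\widehat W}_{\infty}\le C(\norme{W}_{\infty}+\norme{V}_{\infty}^{1/2})$, which is exactly where the $\norme{V}_{\infty}^{1/2}$ in the exponent comes from, and the pointwise bounds \eqref{eq:ponctualpsi}--\eqref{eq:ponctuallogpsi} let one transfer the estimate back to $\phi$. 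For the reduced $V=0$ problem the paper then uses the \emph{time-independent} modulus $\varphi$ of \eqref{eq:ellipticvarphi} (with slope chosen larger than $\norme{\nabla\phi_0}_{\infty}$) together with an $\varepsilon e^{t}$ penalization, and handles the Dirichlet boundary not through $\norme{\psi}_{\infty}$ as you propose, but through the explicit barrier $v(x)=\varphi((x-y)\cdot\nu(y))$, which is simultaneously a supersolution and dominates $|\phi_0|$, giving $|\phi(t,x)|\le\varphi(d(x))$ and ruling out boundary touching by concavity of $\varphi$; this keeps the estimate free of any extra $T$-dependence. Your outline would need to incorporate the multiplier step (or an equally effective substitute) to reach the stated bound.
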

Before continuing, let us make some comments on \Cref{th:mainresult2}.
\begin{itemize}
\item For $V=0$, we can also prove \Cref{th:mainresult2} for $W \in L^{\infty}(Q_T;\R^N)$. We expect that the result of \Cref{th:mainresult2} is also true for $W \in L^{\infty}(Q_T;\R^N), V \in L^{\infty}(Q_T;\R)$, see \Cref{rmk:multiplierdt} below.
\item For homogeneous Neumann boundary conditions on $\phi$, we can also prove \Cref{th:mainresult2}, but only for potential $V= 0$.
\item As for the elliptic case, the constant appearing in the estimate \eqref{eq:GradParabolic} seems to have the good scaling in time and space. Indeed, the first term $\exp(CT\norme{V}_{\infty})$ is due to the natural dissipation in time of the differential operator $\partial_t + V$ while the second term comes from the fact that the elliptic operator $ -\Delta + W \cdot \nabla + V$ is of order two in space and $W \cdot \nabla$, $V$ are respectively perturbations of order one and order zero.
\item For $F=0$ and $V=0$, the estimate \eqref{eq:GradParabolic} has to be linked to the classical $L^2$ dissipation estimate
\begin{equation}
\label{eq:expL2}
\norme{ \phi(T,\cdot)}_{L^2(\Omega)} +\norme{\nabla \phi}_{L^{2}(Q_T)}  \leq \exp\Bigg(C  \norme{W}_{L^{\infty}}^2 T \Bigg) \norme{\phi_0}_{L^{2}(\Omega)},
\end{equation}
obtained by multiplication of \eqref{eq:parabolicphi} by $\phi$, Young's inequality and Gronwall's lemma. First, the estimate \eqref{eq:expL2} is not as accurate as \eqref{eq:GradParabolic} for large values of  $\norme{W}_{L^{\infty}}$. Secondly and more surprising, the estimate \eqref{eq:GradParabolic} does not depend on the size of the time interval $(0,T)$.
\item Comparing to the elliptic case, we do not impose sign condition on the potential $V$ because we can always assume that $V \geq 0$, considering the parabolic equation satisfied by $\exp(-t |V|_{\infty}) \phi$.
\end{itemize}

\subsection{Strategy of the proofs and organization of the paper}

To prove \Cref{th:mainresult1}, \Cref{th:mainresult2}, we use gradient estimates inspired by a series of paper from Andrews and Clutterbuck, see \cite{AC09a, AC09b} and the nice survey \cite{And12} for an introduction to such a technique. Basically, the crucial idea is to remark that the modulus of continuity of the solution to an elliptic equation, respectively a parabolic equation, is also a solution to an elliptic equation, respectively parabolic equation itself. So, applying maximum principle, one can deduce pointwise bounds on the modulus of continuity therefore pointwise bounds on the gradient. This is done in \Cref{sec:expgrowthelliptic} for the elliptic case and in \Cref{sec:expgrowthparabolic} for the parabolic case.\\
\indent In \Cref{sec:weakquantitativeelliptic}, we discuss some applications of \Cref{th:mainresult1} to weak quantitative unique continuation in $L^1$ for elliptic equations. To do this, we use a duality argument similar to the one performed in the proof of \Cref{th:LandisOneD}.

\section{Proof of the main results}

\subsection{Proof of the exponential bound for solution to elliptic equation}
\label{sec:expgrowthelliptic}
The goal of this section is to prove \Cref{th:mainresult1}.\\
\indent We first prove \Cref{th:mainresult1} in the special case $V = 0$ and homogeneous Neumann boundary conditions. Secondly, we make a small adaptation for obtaining \Cref{th:mainresult1} in the case $V=0$ and homogeneous Dirichlet boundary conditions. Finally, we prove \Cref{th:mainresult1} in the case $V \geq 0$ and homogeneous Dirichlet boundary conditions. 
\begin{proof}[Proof of \Cref{th:mainresult1} for $V= 0$ and the homogeneous Neumann boundary case.]
We denote $R := \diam(\Omega)$.\\
\indent Let $\phi$ be the solution of \eqref{eq:ellipticphi}, with homogeneous Neumann boundary conditions and with $V=0$.\\
\indent Let us set
\begin{equation}
\label{eq:defZ}
\forall (x, y) \in \overline{\Omega} \times \overline{\Omega},\ Z(x,y) := \phi(y) - \phi(x) - 2 \varphi\left(\frac{|y-x|}{2}\right),
\end{equation}
where $\varphi$ is the solution to the following ordinary differential equation
\begin{equation}
\label{eq:ellipticvarphi}
\left\{
\begin{array}{l l}
- \varphi'' = (|W|_{\infty}+1) \varphi' + 2 |F|_{\infty} &\mathrm{in}\ (0,+\infty),\\
\varphi(0) = 0,\ \varphi'(0) = \lambda,
\end{array}
\right.
\end{equation}
with $\lambda >0$ that will be fixed later to ensure that 
\begin{equation}
\label{eq:signvarphi'}
\varphi'>0\ \text{in}\ (0, R).
\end{equation}.
\indent We split the proof in three main steps.\\

\indent \textbf{Step 1: $Z$ is nonpositive}.\\
\indent The goal of this step is to prove that 
\begin{equation}
\label{eq:Zneg}
\forall (x,y) \in \overline{\Omega} \times \overline{\Omega},\ Z(x,y) \leq 0.
\end{equation}
\indent The function $Z$ is continuous on the compact set $\overline{\Omega}^2$, so admits a maximum attained at $(x_0, y_0) \in \overline{\Omega}^2$.\\
\indent If $x_0 = y_0$ then $Z(x_0,y_0) \leq 0$ because $\varphi(0)=0$ so \eqref{eq:Zneg} holds.\\
\indent If $y_0 \in \partial \Omega$ then
\begin{equation*}
D_{\nu_{y_0}} Z(x_0,y_0) =D_{\nu_{y_0}} \phi(y_0) - \varphi' \frac{(y_0-x_0)}{|y_0-x_0|} \cdot \nu_{y _0} \leq - \varphi' \frac{(y_0-x_0)}{|y_0-x_0|} \cdot \nu_{y _0} < 0,
\end{equation*}
because of the homogeneous Neumann boundary conditions, the strict convexity of $\Omega$, implying that $\frac{(y_0-x_0)}{|y_0-x_0|} \cdot \nu_{y _0} < 0$ and $\varphi'>0$. We then deduce that $Z(x_0, y_0 - s \nu_{y_0}) > Z(x_0,y_0)$ for $s>0$ sufficiently small, which contradicts the definition of the maximum.\\
\indent If $x_0 \in \partial \Omega$, the same arguments work.\\
\indent Now we can assume that $(x_0, y_0)$ belong to the open set  $\Omega \times \Omega$. By optimality conditions, we have
\begin{equation*}
\partial_x Z(x_0,y_0 ) = \partial_{y} Z(x_0,y_0)=0 \ \text{and}\ \text{Hess}(Z)(x_0,y_0)\ \text{is nonpositive}.
\end{equation*}
In particular, we can choose an orthonormal basis $(e_i)_{1 \leq i \leq n}$ with $e_n = \frac{y_0-x_0}{|y_0-x_0|}$ and 
\begin{align}
&\frac{d^2}{ds^2} Z(x_0+se_n, y_0-se_n)_{|s=0} \leq 0, \label{eq:Derivativenn}\\
&\frac{d^2}{ds^2} Z(x_0+se_i, y_0+se_i)_{|s=0} \leq 0,\ i=1, \dots, n-1.\label{eq:Derivativeii}
\end{align}
The vanishing of the first derivatives gives
\begin{equation}
\label{eq:vanishderivative}
\nabla \phi (y_0) = \varphi' e_n,\ \nabla \phi(x_0) = \varphi' e_n.
\end{equation}
Along the path $(\hat{x}, \hat{y})=(x_0+s e_i, y_0 + s e_i)$, the distance $|\hat{y}-\hat{x}|$ is constant, so for $i\neq n$, by \eqref{eq:Derivativeii}, we have
\begin{equation}
\label{eq:Hessi}
0 \geq \frac{d^2}{ds^2} Z(x_0+se_i, y_0+se_i)_{|s=0} = \partial_{i}\partial_{i} \phi(y_0) - \partial_{i}\partial_{i} \phi(x_0).
\end{equation}
Along the path $(\hat{x}, \hat{y})=(x_0+s e_n, y_0 - s e_n)$, we have $\frac{d}{ds}|\hat{y}-\hat{x}| = -2$ and $\frac{d^2}{ds^2} |\hat{y}-\hat{x}|= 0$ so by \eqref{eq:Derivativenn}, we have
\begin{equation}
\label{eq:Hessn}
0 \geq \frac{d^2}{ds^2} Z(x_0+se_n, y_0-se_n)_{|s=0} = \partial_{n}\partial_{n} \phi(y_0) - \partial_{n}\partial_{n} \phi(x_0) - 2 \varphi''.
\end{equation}
We sum \eqref{eq:Hessi} for $1 \leq i \leq n-1$ and we add \eqref{eq:Hessn}, then we use \eqref{eq:ellipticphi}, \eqref{eq:ellipticvarphi}, \eqref{eq:vanishderivative} and \eqref{eq:signvarphi'} to get
\begin{align*}
0 &\geq \Delta \phi(y_0) - \Delta \phi(x_0) - 2 \varphi''\\
& \geq -F(y_0) + W(y_0) \cdot \nabla \phi(y_0) + F(x_0) - W(x_0) \cdot \nabla \phi(x_0) + 2 (|W|_{\infty}+1) \varphi' + 4 |F|_{\infty}\\
& \geq - 2 |F|_{\infty} + (W(y_0) - W(x_0)) \cdot \varphi' e_n +  2(|W|_{\infty}+1) \varphi' + 4 |F|_{\infty}\\
& \geq 2 |F|_{\infty} - 2 |W|_{\infty} \varphi' + 2 (|W|_{\infty}+1) \varphi'\\
& > 0.
\end{align*}
This is a contradiction. So $(x_0, y_0) \notin \Omega \times \Omega$.\\
\indent Therefore, the maximum is attained for $x_0 = y_0$ then $Z \leq 0$ which is the conclusion of Step 1.\\

\textbf{Step 2: Explicit form of $\varphi$.}\\
\indent From \eqref{eq:ellipticvarphi}, an easy computation leads to
\begin{equation}
\exists A,B\in \R,\ \forall x \in [0,+\infty),\ \varphi(x) = A + B e^{-(|W|_{\infty} + 1) x} - 2 \frac{|F|_{\infty}x}{(|W|_{\infty} + 1)}.
\end{equation}
The condition $\varphi(0) = 0$ implies $A + B = 0$. The condition \eqref{eq:signvarphi'} gives
\begin{equation*}
\forall x \in [0,R],\ \varphi'(x) = - B (|W|_{\infty} + 1) e^{-(|W|_{\infty} + 1) x} - 2 \frac{|F|_{\infty}}{(|W|_{\infty} + 1)}  > 0
\end{equation*}
It is sufficient to ensure that $\varphi'(R) > 0$, so
\begin{equation*}
- B (|W|_{\infty} + 1) e^{-(|W|_{\infty} + 1) R} >  2 \frac{|F|_{\infty}}{(|W|_{\infty} + 1)},
\end{equation*}
This last condition is fulfilled for instance with
\begin{equation*}
B = - 4 \frac{|F|_{\infty}}{(|W|_{\infty} + 1)^2} e^{(|W|_{\infty} + 1) R},
\end{equation*}
therefore by taking
\begin{equation}
\label{eq:defvarphi0}
\varphi'(0) = \lambda =  2 \frac{|F|_{\infty}}{(|W|_{\infty} + 1)} e^{(|W|_{\infty} + 1) R},
\end{equation}
the condition \eqref{eq:signvarphi'} is satisfied.

\textbf{Step 3: Gradient estimate.}\\
\indent From Step 1, we have \eqref{eq:Zneg}, i.e. $Z$ stays nonpositive where $Z$ is defined in \eqref{eq:defZ}. So by letting $y \rightarrow x$, we deduce from \eqref{eq:defvarphi0} and $\varphi(0)=0$
\begin{equation*}
\forall x \in \Omega,\ |\nabla \phi(x)| \leq \varphi'(0) \leq C \norme{F}_{L^{\infty}}  e^{C (\norme{W}_{L^{\infty}}+1) R},
\end{equation*}
which is exactly the desired bound \eqref{eq:GradElliptic}.
\end{proof}
By looking at the previous proof of \Cref{th:mainresult1} when $V=0$ and for homogeneous Neumann boundary conditions, we see that the homogeneous Dirichlet case can be treated in exactly the same way except when $x_0 \in \partial \Omega$ or $y_0 \in \partial \Omega$. So we only treat this case in the following proof.
\begin{proof}[Proof of \Cref{th:mainresult1} for $V= 0$ and the homogeneous Dirichlet boundary case.] We split the proof in two steps.\\
\indent \textbf{Step 1: A ponctual bound for $\phi$.} First, we show that 
\begin{equation}
\label{eq:ponctboundphivarphi}
\forall x \in \Omega,\ |\phi(x)| \leq \varphi(d(x)),
\end{equation} where $d(x) = \text{dist}(x, \partial \Omega)$ and $\varphi$ is defined in \eqref{eq:ellipticvarphi}.\\
\indent Let us fix $y \in \partial \Omega$, we set $v(x) = \varphi((x-y) \cdot \nu(y))$. We have
\begin{align*}
- \Delta v = - \nabla \cdot (\nu(y)\cdot \varphi') = - \sum_{i,j} \nu_i \nu_j \varphi''
& = \sum_{i,j} \nu_i \nu_j ((|W|_{\infty} + 1) \varphi' + |F|_{\infty}) \\
& \geq |W|_{\infty} \varphi' + |F|_{\infty},
\end{align*}
and
\begin{align*}
W \cdot \nabla v = \sum_{i} W_i(x) \cdot \nu_i(y) \varphi'.
\end{align*}
So, by using \eqref{eq:signvarphi'}, and Cauchy-Schwarz inequality,
\begin{equation*}
- \Delta v + W \cdot \nabla v \geq |W|_{\infty} \varphi' + |F|_{\infty} - \left(\sum_i W_i(x)^2\right)^{1/2} \varphi' \geq |W|_{\infty} \varphi' + |F|_{\infty} - |W|_{\infty} \varphi'  \geq 0.
\end{equation*}
Moreover, we have $v(x) = 0$ on the boundary, so $v$ is a supersolution to the elliptic problem \eqref{eq:ellipticphi} then by the comparison principle, we have
$$ \forall x \in \Omega,\   |\phi(x)| \leq v(x) = \varphi((x-y) \cdot \nu(y)).$$
We then deduce the expected bound \eqref{eq:ponctboundphivarphi} by minimizing over $y$.\\
\indent \textbf{Step 2: The maximum of $Z$ is attained at $(x_0,y_0)$ with $x_0 \in \partial \Omega$ or $y_0 \in \partial \Omega$.}  If for instance $x_0 \in \partial\Omega$, we have $\phi(x_0) = 0$ and by using \eqref{eq:ponctboundphivarphi}
\begin{align*}
Z(x_0,y_0)& = \phi(y_0) - \phi(x_0) - 2 \varphi\left(\frac{|y_0-x_0|}{2}\right)\\
& \leq \varphi(d(y_0)) - 2 \varphi\left(\frac{|y_0-x_0|}{2}\right)\\
& \leq \varphi(|y_0-x_0|) - 2 \varphi\left(\frac{|y_0-x_0|}{2}\right)\\
& \leq 0.
\end{align*}
Here, we have used the fact that $\varphi$ is increasing and concave by \eqref{eq:ellipticvarphi}.
\end{proof}
Now, we show how the proof of \Cref{th:mainresult1} for $V \geq 0$ can be reduced to the case $V = 0$. This is done thanks to the following lemma, stating the existence of a positive multiplier.
\begin{lem}
\label{lem:multiplier}
Let $W \in L^{\infty}(\Omega;\R^N)$ and $V \in L^{\infty}(\Omega;\R^+)$. Then there exist a positive constant $C>0$ and $\psi$ satisfying
\begin{align}
\label{eq:ellipticpsi}- \Delta \psi + W \cdot \nabla \psi + V \psi = 0 \ \text{in}\ \Omega,&\\
\label{eq:ponctualpsi}\exp(-C \diam(\Omega)(|W|_{\infty} + |V|_{\infty}^{1/2})) \leq \psi \leq \exp(C \diam(\Omega) (|W| _{\infty} + |V|_{\infty}^{1/2}))\ \text{in}\ \Omega,&\\
 \label{eq:ponctuallogpsi}\norme{\nabla \log(\psi)}_{L^{\infty}(\Omega)} \leq C(|W| _{\infty} + |V|_{\infty}^{1/2}).&
\end{align}
\end{lem}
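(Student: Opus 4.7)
The aim is to produce a positive multiplier $\psi$ solving $L\psi := -\Delta\psi + W\cdot\nabla\psi + V\psi = 0$ with two-sided exponential bounds and a linear bound on $|\nabla\log\psi|$. My plan is to construct $\psi$ from an explicit exponential sub-solution, to use the sign $V\geq 0$ to control $\psi$ above by a constant super-solution, and to get the Lipschitz bound on $\log\psi$ from a rescaled Harnack inequality.

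Set $R := \diam(\Omega)$ and $\alpha := \|W\|_\infty + \|V\|_\infty^{1/2}$. Pick $\mu > 0$ with $\mu^2 = \|W\|_\infty\mu + \|V\|_\infty$, so that $\mu \leq \alpha$. After rotation and translation I may assume $\Omega\subset\{0\leq x_1 \leq R\}$. The function $\psi_-(x) := e^{\mu x_1}$ is then a smooth sub-solution:
\[
L\psi_- = (-\mu^2 + \mu W_1 + V)\psi_- \leq (-\mu^2 + \mu\|W\|_\infty + \|V\|_\infty)\psi_- = 0,
\]
and moreover $|\nabla\log\psi_-|\equiv\mu\leq\alpha$.

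I would then define $\psi$ as the unique solution of the Dirichlet problem $L\psi = 0$ in $\Omega$, $\psi|_{\partial\Omega} = e^{\mu x_1}|_{\partial\Omega}$, existence and regularity following from classical elliptic theory (\cite{GT83}, Ch.~8) since $V\geq 0$. Applying the weak maximum principle to $\psi_- - \psi$ (which solves $L(\psi_--\psi) \leq 0$ and vanishes on $\partial\Omega$) yields $\psi\geq \psi_-\geq 1$ in $\Omega$; applying it to $e^{\mu R} - \psi$ (the constant $e^{\mu R}$ is a super-solution because $V\geq 0$, and dominates $\psi$ on $\partial\Omega$) yields $\psi\leq e^{\mu R}$. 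This gives $1 \leq \psi \leq e^{\mu R}\leq e^{\alpha R}$, which is the two-sided bound \eqref{eq:ponctualpsi}.

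The main obstacle is the gradient bound \eqref{eq:ponctuallogpsi}, which has to be linear in $\alpha$ and independent of $R$. For this I would rely on a quantitative interior Harnack inequality: for any $x_0 \in \Omega$ and $r>0$ with $B_{2r}(x_0)\Subset\Omega$,
\[
\sup_{B_r(x_0)}\psi \leq \exp\bigl(c_N(1 + r\alpha)\bigr) \inf_{B_r(x_0)}\psi,
\]
which follows from Moser/Gilbarg--Trudinger's Harnack inequality after the rescaling $y = (x-x_0)/r$ (the rescaled coefficients $rW(x_0+ry)$ and $r^2V(x_0+ry)$ have norms bounded in terms of $(r\alpha)$ and $(r\alpha)^2$). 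Choosing $r = 1/\alpha$ forces $\log\psi$ to have oscillation $\leq C_N$ over balls of radius $1/\alpha$, whence $|\nabla\log\psi|(x_0)\leq C_N\alpha$ at every point at distance $\geq 2/\alpha$ from $\partial\Omega$. To obtain the estimate up to the boundary I would perform the construction not on $\Omega$ but on a thickened domain $\tilde\Omega$ with $\{x : d(x,\Omega) < 2/\alpha\}\subset\tilde\Omega$, after extending $W$ and $V$ to $\tilde\Omega$ preserving $L^\infty$ norms, and then restrict the resulting $\psi$ back to $\Omega$. The delicate point, and the main technical obstacle, is to check that the Harnack constant indeed scales as $\exp(c_N(1+r\alpha))$ rather than via some worse combination of $\|W\|_\infty$ and $\|V\|_\infty$; this is exactly what the natural parabolic rescaling provides.
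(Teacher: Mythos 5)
Your construction of $\psi$ and the two-sided bound \eqref{eq:ponctualpsi} are correct and rest on the same device as the paper: the exponential subsolution $e^{\mu x_1}$ with $\mu\leq \alpha:=\norme{W}_{\infty}+\norme{V}_{\infty}^{1/2}$ and a constant supersolution, both exploiting $V\geq 0$; whether you then produce $\psi$ by the sub/supersolution method on an enlarged domain (as the paper does, after extending $W,V$ by zero) or by solving a Dirichlet problem with boundary datum $e^{\mu x_1}$ and trapping $\psi$ by the maximum principle is immaterial, and the thickening of the domain so that interior balls reach $\overline{\Omega}$ is the same trick as in the paper.

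The gap is in the derivation of \eqref{eq:ponctuallogpsi}. The Harnack inequality at scale $r=1/\alpha$ bounds the oscillation of $\log\psi$ on balls of radius $1/\alpha$ by a dimensional constant, but an oscillation bound at a \emph{fixed} scale does not yield a pointwise bound on the gradient: the step ``whence $|\nabla\log\psi|(x_0)\leq C_N\,\alpha$'' is not a valid inference, since nothing in your argument controls $\nabla\psi$ at all up to that point. What is missing --- and what the paper's Appendix (proof of \Cref{lem:kenig}) supplies --- is a scaled interior gradient estimate for the solution itself: writing the equation as a Poisson equation $-\Delta\psi=-W\cdot\nabla\psi-V\psi$ and applying \cite[Theorem 3.9]{GT83} on $B_{2r}(x_0)$, one absorbs the drift term for $r$ small compared with $1/\norme{W}_{\infty}$ and obtains $\norme{\nabla\psi}_{L^{\infty}(B_r(x_0))}\leq \frac{C}{r}\left(1+r^{2}\norme{V}_{\infty}\right)\sup_{B_{2r}(x_0)}\psi$. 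Only then does Harnack enter: with $r\sim 1/\alpha$ it converts $\sup_{B_{2r}(x_0)}\psi$ into $C\,\psi(x)$ for $x\in B_r(x_0)$, and the two estimates together give $|\nabla\psi|/\psi\leq C\alpha$. Incidentally, the point you flag as delicate --- the dependence of the Harnack constant on the coefficients --- is harmless here: with $r=1/\alpha$ the rescaled coefficients $rW$ and $r^{2}V$ have norm at most $1$, so the Harnack constant is purely dimensional; the genuinely missing ingredient is the interior gradient estimate above.
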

Let us admit \Cref{lem:multiplier} for the moment and let us prove \Cref{th:mainresult1} for $V \geq 0$.
\begin{proof}[Proof of \Cref{th:mainresult1} for $V \geq 0$ and homogeneous Dirichlet boundary conditions.]
Let $\phi$ be the solution of \eqref{eq:ellipticphi} with homogeneous Dirichlet boundary conditions. Let $\psi$ be as in \Cref{lem:multiplier}. Then a straightforward computation, using \eqref{eq:ellipticphi} and \eqref{eq:ellipticpsi} leads to 
\begin{equation}
\label{eq:ellipticphihat}
\left\{
\begin{array}{l l}
-\Delta \widehat{\phi} + \widehat{W} \cdot \nabla \widehat{\phi} = \widehat{F} &\mathrm{in}\ \Omega,\\
 \widehat{\phi} = 0&\mathrm{on}\ \partial \Omega,
\end{array}
\right.
\end{equation}
where we have set
\begin{equation}
 \widehat{\phi} := \frac{\phi}{\psi},\  \widehat{W} := W - 2 \nabla \log(\psi),\  \widehat{F} := \frac{F}{\psi}.
\end{equation}
So by applying \Cref{th:mainresult1} for $V=0$ and using \eqref{eq:ponctualpsi}, \eqref{eq:ponctuallogpsi}, we obtain
\begin{align}
\forall x \in \Omega,\ |\nabla  \widehat{\phi}(x)| &\leq \exp\Bigg(C\Big(1 + \norme{\widehat{W}}_{L^{\infty}}\Big) \diam(\Omega)\Bigg) \norme{\widehat{F}}_{L^{\infty}}\notag\\
& \leq \exp\Bigg(C\Big(1 + \norme{W}_{L^{\infty}} + \norme{V}_{L^{\infty}}^{1/2}\Big) \diam(\Omega)\Bigg) \norme{F}_{L^{\infty}} .
\label{eq:GradElliptichat}
\end{align}
Using Dirichlet boundary conditions for $\widehat{\phi}$ and \eqref{eq:GradElliptichat}, we easily get
\begin{align}
\forall x \in \Omega,\ | \widehat{\phi}(x)| 
& \leq \exp\Bigg(C\Big(1 + \norme{W}_{L^{\infty}} + \norme{V}_{L^{\infty}}^{1/2}\Big) \diam(\Omega)\Bigg) \norme{F}_{L^{\infty}} .
\label{eq:Elliptichat}
\end{align}
By coming back to the variable $\phi$, 
$$ \nabla \phi = \widehat{\phi} \nabla \psi + \psi \nabla \widehat{\psi} = \widehat{\phi} \psi \nabla (\log \psi) + \psi \nabla \widehat{\phi},$$
using \eqref{eq:Elliptichat}, \eqref{eq:GradElliptichat}, \eqref{eq:ponctualpsi}, \eqref{eq:ponctuallogpsi}, we deduce
the expected bound \eqref{eq:GradElliptic}.
\end{proof}
Now, we devote the rest of the section to the proof of \Cref{lem:multiplier}, which is crucially inspired by \cite{KSW15}.
\begin{proof}[Proof of \Cref{lem:multiplier}]
Without loss of generality, we can assume that $\Omega = B(0,R)$ where $R = \diam(\Omega)$. We set $K = \norme{W}_{L^{\infty}(\Omega)}$ and $M = \norme{V}_{L^{\infty}\Omega)}$.\\
\indent First, we extend $W$ and $V$ by $0$ on $B(0,2R) \setminus B(0,R)$.\\
\indent We construct a positive multiplier $\psi$ as follows.\\
\indent Let $\psi_1(x) = \exp((K+\sqrt{M})  x_1)$ then 
\begin{align}
-\Delta \psi_1 + W(x) \cdot \nabla \psi_1 + V(x) \psi_1 &= (-(K + \sqrt{M})^2 + W_1(x) (K + \sqrt{M}) + V(x) ) \psi_1\notag\\
& \leq  (-(K + \sqrt{M})^2 + K (K+ \sqrt{M}) + M) \psi_1\notag\\
& \leq -K \sqrt{M} \psi_1 \notag\\
-\Delta \psi_1 + W(x) \cdot \nabla \psi_1 + V(x) \psi_1 &\leq 0\label{eq:subsolution} 
\end{align}
So $\psi_1$ is a subsolution of \eqref{eq:ellipticpsi}. On the other hand, $\psi_2(x) := \exp(2 R (K+\sqrt{M}))$ satisfies
\begin{equation}
\label{eq:supersolution}
-\Delta \psi_2+ W(x) \cdot \nabla \psi_2 +  V(x) \psi_2 = V(x) \psi_2 \geq 0\ \text{because}\ V \geq 0.
\end{equation}
So, $\psi_2$ is supersolution of \eqref{eq:ellipticpsi}. Moreover, we have
\begin{equation}
\label{eq:classsubup}
\forall x \in B(0,2R),\ \psi_1(x) \leq \psi_2(x).
\end{equation}
Therefore, from \eqref{eq:subsolution}, \eqref{eq:supersolution} and \eqref{eq:classsubup}, there exists a solution $\psi$ to \eqref{eq:ellipticpsi}. Moreover, we have that
$$ \forall x \in B(0,2R), \ \psi_1(x) \leq \psi(x) \leq \psi_2(x),$$
so we easily deduce the bound \eqref{eq:ponctualpsi}.\\
\indent Now, we set $\tilde{\psi}(x) =\psi(R x)$ and we prove the following result
\begin{lem}
\label{lem:kenig} 
For every $x \in B(0,7/5)$,
\begin{equation}
\label{eq:lemkenig}
|\nabla \log(\tilde{\psi})| \leq C R (K+ \sqrt{M}).
\end{equation}
\end{lem}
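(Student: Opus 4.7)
The plan is first to rescale to unit scale and then to establish a Cheng-Yau/Bernstein-type gradient estimate for the logarithm of the positive multiplier, following the spirit of \cite{KSW15}. Setting $\tilde{W}(x) := W(Rx)$, $\tilde{V}(x) := V(Rx)$ (so that $\norme{\tilde{W}}_\infty \leq K$, $\norme{\tilde{V}}_\infty \leq M$), the function $\tilde{\psi}$ satisfies
$$ -\Delta \tilde{\psi} + R\,\tilde{W}\cdot\nabla\tilde{\psi} + R^{2}\,\tilde{V}\,\tilde{\psi} = 0 \quad \text{in } B(0,2).$$
The pointwise lower bound already proved in \eqref{eq:ponctualpsi} guarantees $\tilde{\psi} > 0$ everywhere on $B(0,2)$, so $u := \log\tilde{\psi}$ is well defined and, by the same bounds, satisfies $|u| \leq CR(K+\sqrt{M})$. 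Using $\Delta u = \Delta\tilde{\psi}/\tilde{\psi} - |\nabla u|^2$ together with the equation for $\tilde{\psi}$, one obtains the key pointwise identity
$$ |\nabla u|^2 \;=\; -\Delta u + R\,\tilde{W}\cdot\nabla u + R^{2}\,\tilde{V} \quad \text{in } B(0,2).$$

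This identity has exactly the right structure for the desired scaling: the quadratic term in $\nabla u$ sits alone on the left, while the right-hand side is linear in $\nabla u$ with a bounded drift and a bounded source. Fixing a smooth cutoff $\eta \in C_c^\infty(B(0,3/2))$ with $\eta \equiv 1$ on $B(0,7/5)$, I would multiply the identity by $\eta^2$, integrate by parts the Laplacian term, and use Young's inequality to absorb $\tfrac12 \int \eta^2 |\nabla u|^2$ into the left-hand side, producing the Caccioppoli-type bound
$$ \int \eta^2 |\nabla u|^2 \;\leq\; C\bigl(\norme{\nabla\eta}_\infty^2 + R^2 K^2 + R^2 M\bigr) \;\leq\; CR^2(K+\sqrt{M})^2.$$
This already gives $\norme{\nabla u}_{L^2(B(0,7/5))} \leq CR(K+\sqrt{M})$, with exactly the correct \emph{linear} dependence on $R(K+\sqrt{M})$.

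The main obstacle is to promote this $L^2$ control into the claimed pointwise estimate \eqref{eq:lemkenig}: a naive use of $W^{2,p}$ elliptic regularity would yield an exponential, rather than linear, dependence on $R(K+\sqrt{M})$, because it would not exploit the good sign of the $|\nabla u|^2$ term. To preserve the linear scaling, I would run the Moser-style iteration from \cite{KSW15}: pick a family of nested shrinking cutoffs between $B(0,3/2)$ and $B(0,7/5)$ and, at each level $q \geq 1$, test the identity for $|\nabla u|^2$ against $\eta^2 (|\nabla u|^2+\delta)^{q-1}$ for a small auxiliary $\delta>0$, absorbing the quadratic contributions using the good sign on the left. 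Each iteration bootstraps $L^{2q}$ control of $\nabla u$ to $L^{2(q+1)}$ with constants that remain linear in $R(K+\sqrt{M})$; sending $q \to \infty$ (or, equivalently, applying Sobolev embedding with $p>N$ in the last step) delivers the pointwise bound $|\nabla u| \leq CR(K+\sqrt{M})$ on $B(0,7/5)$, which is \eqref{eq:lemkenig}. A minor technical point along the way is that $W$, $V$ are only $L^\infty$, so the Bernstein manipulations on $|\nabla u|^2$ should be carried out on a smooth approximation $(W_\varepsilon, V_\varepsilon)$ and the resulting bound, which is uniform in $\varepsilon$, passed to the limit.
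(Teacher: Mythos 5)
Your first two steps are sound: the identity $|\nabla u|^2=-\Delta u+R\,\tilde W\cdot\nabla u+R^2\,\tilde V$ for $u=\log\tilde\psi$ and the Caccioppoli bound $\int\eta^2|\nabla u|^2\lesssim 1+R^2K^2+R^2M$ are correct. The gap is precisely where you pass from $L^2$ to $L^\infty$. Testing the identity against $\eta^2(|\nabla u|^2+\delta)^{q-1}$ and integrating the $-\Delta u$ term by parts produces, besides the harmless $\nabla\eta$ term, the term $(q-1)\int\eta^2(|\nabla u|^2+\delta)^{q-2}\,\nabla u\cdot\nabla\bigl(|\nabla u|^2\bigr)$, i.e.\ a Hessian contribution $D^2u(\nabla u,\nabla u)$ which has no sign and is not controlled by the equation (the equation only controls the trace $\Delta u$); integrating it by parts once more just returns a tautology. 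So the ``good sign'' of $|\nabla u|^2$ helps only at the level $q=1$. A genuine Bernstein/Bochner argument on $|\nabla u|^2$ requires differentiating the equation, hence derivatives of $W,V$, and the proposed smoothing $(W_\varepsilon,V_\varepsilon)$ does not repair this: the constants then involve $\norme{\nabla W_\varepsilon}_{\infty}$, which is not uniform in $\varepsilon$ (this is exactly the obstruction to Bernstein's method pointed out after \Cref{th:mainresult1}). Moreover, even granting $\norme{\nabla u}_{L^{2q}}\lesssim R(K+\sqrt M)$ for large $q$, your closing step (Calder\'on--Zygmund/Sobolev with $p>N$ on unit-size balls, writing $-\Delta u=|\nabla u|^2-R\tilde W\cdot\nabla u-R^2\tilde V$) gives $\norme{\nabla u}_{\infty}\lesssim\norme{|\nabla u|^2}_{L^p}+\cdots\sim\bigl(R(K+\sqrt M)\bigr)^2$, i.e.\ quadratic rather than the claimed linear dependence.

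The missing idea is to work at the scale $r\sim\bigl(R(K+\sqrt M)\bigr)^{-1}$ at which the coefficients become of unit size; that is how the linear rate is produced. The paper's proof does not take logarithms at all: it applies the interior gradient estimate of \cite[Theorem 3.9]{GT83} on balls $B_{x_0}(2r)$ with the drift absorbed (possible since $r\norme{\tilde W}_{\infty}\lesssim1$), giving $\norme{\nabla\tilde\psi}_{L^\infty(B_{x_0}(r))}\lesssim\bigl(r^{-1}+r\norme{\tilde V}_{\infty}\bigr)\norme{\tilde\psi}_{L^\infty(B_{x_0}(2r))}$, and then invokes Harnack's inequality at that same scale, whose constant is uniform precisely because $r\bigl(\norme{\tilde W}_{\infty}+\norme{\tilde V}_{\infty}^{1/2}\bigr)\lesssim1$, to replace $\norme{\tilde\psi}_{L^\infty(B_{x_0}(2r))}$ by $C\tilde\psi(x)$; dividing yields \eqref{eq:lemkenig}. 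If you prefer to keep the log-based route of \cite{KSW15}, you must normalize $\varphi=\varepsilon\log\tilde\psi$ with $\varepsilon\sim\bigl(R(K+\sqrt M)\bigr)^{-1}$, prove Morrey-type bounds $\int_{B_r(x)}|\nabla\varphi|^2\lesssim r^N$ for all $r\in(\varepsilon,1/5)$ by iteration on dyadic balls, and then still supply a nontrivial argument converting that Morrey information into a pointwise bound --- which is exactly the step your sketch leaves open.
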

\Cref{lem:kenig} is similar to \cite[Lemma 2.2]{KSW15} for $N=2$, $W=0$ and replacing $V \leftarrow R^2 V$. For the sake of completeness, we give a new proof of \Cref{lem:kenig} in \Cref{sec:appendixa}, communicated by Carlos Kenig, valid for any spatial dimension $N$.\\
\indent From \Cref{lem:kenig}, the definition of $\tilde{\psi}$, we easily obtain \eqref{eq:ponctuallogpsi}, which concludes the proof of \Cref{lem:multiplier}.
\end{proof}

\subsection{Proof of the exponential bound for solution to parabolic equation}
\label{sec:expgrowthparabolic}

We first prove \Cref{th:mainresult2} in the special case $V = 0$ then we show how we can indeed restrict ourselves to this particular case.\\
\indent We prove \Cref{th:mainresult2} in the homogeneous Neumann boundary case then we show how to adapt the proof for the homogeneous Dirichlet boundary case.
\begin{proof}[Proof of \Cref{th:mainresult2} in the case $V=0$ and for homogeneous Neumann boundary conditions.]
For any $\varepsilon >0$, we define a function $Z_{\varepsilon}$ on $[0,T] \times \overline{\Omega} \times \overline{\Omega}$ by
\begin{equation}
Z_{\varepsilon}(t,x,y) = \phi(t,y) - \phi(t,x) - 2 \varphi\left(\frac{|y-x|}{2}\right) - \varepsilon e^t,
\end{equation}
where $\varphi$ is the solution to the ordinary differential equation \eqref{eq:ellipticvarphi} with $\lambda >0$ that has to fixed later to ensure that $\varphi' > |\nabla \phi_0|_{\infty}$.\\
\indent By assumption, $Z_{\varepsilon}(0,x,y) \leq - \varepsilon$ for every $x \neq y$ using the mean-value theorem  and $Z_{\varepsilon}(t,x,x) \leq - \varepsilon$ for every $x \in \Omega$ and $t \geq 0$. We will prove for any $\varepsilon >0$ that $Z_{\varepsilon}$ is negative on $[0,T] \times \overline{\Omega} \times \overline{\Omega}$. If this is not true, then there exists a first time $t_0 >0$ and points $x_0 \neq y_0 \in \overline{\Omega}$ such that $Z_{\varepsilon}(t_0, x_0,y_0) = 0$.\\
\indent We consider two possibilities: if $y_0 \in \partial\Omega$ then we have 
\begin{equation}
D_{\nu_{y_0}} Z_{\varepsilon}(t_0,x_0,y_0) =D_{\nu_{y_0}} \phi(t_0,y_0) - \varphi' \frac{(y_0-x_0)}{|y_0-x_0|} \cdot \nu_{y _0} \leq - \varphi' \frac{(y_0-x_0)}{|y_0-x_0|} \cdot \nu_{y _0} < 0,
\end{equation}
because of the homogeneous Neumann boundary conditions, the strict convexity of $\Omega$, implying that $\frac{(y_0-x_0)}{|y_0-x_0|} \cdot \nu_{y _0} < 0$ and $\varphi'>0$. We then deduce that $Z_{\varepsilon}(t_0,x_0, y_0 - s \nu_{y_0}) > 0$ for $s>0$ sufficiently small, which contradicts the fact that $Z_{\varepsilon} \leq 0$ on $[0,t_0] \times \times \overline{\Omega} \times \overline{\Omega}$. The case where $x_0 \in \partial\Omega$ is similar.\\
\indent The second possibility is that $x$ and $y$ are in the interior of $\Omega$. Then all first spatial derivatives of $Z_{\varepsilon}$ (in $x$ and $y$) at $(t_0, x_0, y_0)$ vanish, and the full $2 N \times 2 N$ matrix of second derivatives is nonpositive. In particular, we choose an orthonormal basis
$(e_i)_{1 \leq i \leq n}$ with $e_n = \frac{y_0-x_0}{|y_0-x_0|}$ and 
\begin{align}
&\frac{d^2}{ds^2} Z(t_0,x_0+se_n, y_0-se_n)_{|s=0} \leq 0,\\
&\frac{d^2}{ds^2} Z(t_0,x_0+se_i, y_0+se_i)_{|s=0} \leq 0,\ i=1, \dots, n-1.
\end{align}
The vanishing of the first derivatives gives
\begin{equation}
\nabla \phi (t_0,y_0) = \varphi' e_n,\ \nabla \phi(t_0,x_0) = \varphi' e_n.
\end{equation}
Along the path $(\hat{x}, \hat{y})=(x+s e_i, y + s e_i)$, the distance $|\hat{y}-\hat{x}|$ is constant, so for $i\neq n$,
\begin{equation}
\label{eq:HessiPar}
0 \geq \frac{d^2}{ds^2} Z(t_0,x_0+se_i, y_0+se_i)_{|s=0} = \partial_{i}\partial_{i} \phi(t_0,y_0) - \partial_{i}\partial_{i} \phi(t_0,x_0).
\end{equation}
Along the path $(\hat{x}, \hat{y})=(x+s e_n, y - s e_n)$, we have $\frac{d}{ds}|\hat{y}-\hat{x}| = -2$ and $\frac{d^2}{ds^2} |\hat{y}-\hat{x}|= 0$ so
\begin{equation}
\label{eq:HessnPar}
0 \geq \frac{d^2}{ds^2} Z(t_0,x_0+se_n, y_0-se_n)_{|s=0} = \partial_{n}\partial_{n} \phi(y_0) - \partial_{n}\partial_{n} \phi(t_0,x_0) - 2 \varphi''.
\end{equation}
We sum for $1 \leq i \leq n-1$ \eqref{eq:Hessi} and we add \eqref{eq:Hessn} to get
\begin{equation}
0 \geq \Delta \phi(t_0,y_0) - \Delta \phi(t_0,x_0) - 2 \varphi''.
\end{equation}
Finally, we compute the time derivative of $Z_{\varepsilon}$ at $(t_0,x_0,y_0)$
\begin{align*}
\partial_{t} Z_{\varepsilon} &=  \Delta \phi(t_0,y_0) -  W(t_0,y_0) \cdot \nabla \phi(t_0, y_0)- \Delta \phi(t_0,x_0) + F(t_0,y_0)\\
&\quad \quad + W(t_0,x_0) \cdot \nabla \phi(t_0,x_0) - F(t_0,x_0) - \varepsilon e^{t}\\
& \leq 2 \varphi'' + 2 |W|_{\infty} \varphi' + 2 | F|_{\infty} -  \varepsilon e^{t}\\
& < 0.
\end{align*}
This strict inequality is impossible since this the first time where $Z_{\varepsilon} \geq 0$. This contradiction proves that $Z_{\varepsilon} < 0$ for every $\varepsilon > 0$ and therefore $Z_0 \leq 0$.\\
\indent By looking at the end of the proof of \Cref{th:mainresult1} in the case $V=0$ and for homogeneous Neumann boundary conditions, see  Step 2 and Step 3, we easily obtain that taking
$$ \varphi'(0) = \lambda = 2 \frac{|F|_{\infty}}{(|W|_{\infty} + 1)} e^{(|W|_{\infty} + 1) R} + 2 |\nabla \phi_0|_{\infty} e^{(|W|_{\infty} + 1) R},$$
this ensures that $\varphi' > |\nabla \phi_0|_{\infty}$. Hence, we deduce the expected bound \eqref{eq:GradParabolic} which concludes the proof of \Cref{th:mainresult2}.
\end{proof}
By looking at the previous proof of \Cref{th:mainresult2} when $V=0$ and for homogeneous Neumann boundary conditions, we see that the homogeneous Dirichlet case can be treated in exactly the same way except when $x_0 \in \partial \Omega$ or $y_0 \in \partial \Omega$. So we only treat this case in the following proof.
\begin{proof}[Proof of \Cref{th:mainresult2} for $V= 0$ and the homogeneous Dirichlet boundary case.] We split the proof in two steps.\\
\indent \textbf{Step 1: A ponctual bound for $\phi$.} First, we show that 
\begin{equation}
\label{eq:ponctboundphivarphiPar}
\forall (t,x) \in [0,T]\times\Omega,\ |\phi(t,x)| \leq \varphi(d(x)),
\end{equation} where $d(x) = \text{dist}(x, \partial \Omega)$ and $\varphi$ is defined in \eqref{eq:ellipticvarphi}.\\
\indent Let us fix $y \in \partial \Omega$, we set $v(x) = \varphi((x-y) \cdot \nu(y))$. We have
\begin{align*}
- \Delta v = - \nabla \cdot (\nu(y)\cdot \varphi') = - \sum_{i,j} \nu_i \nu_j \varphi''
& = \sum_{i,j} \nu_i \nu_j ((|W|_{\infty} + 1) \varphi' + |F|_{\infty}) \\
& \geq |W|_{\infty} \varphi' + |F|_{\infty},
\end{align*}
and
\begin{align*}
W \cdot \nabla v = \sum_{i} W_i(x) \cdot \nu_i(y) \varphi'.
\end{align*}
So, by using \eqref{eq:signvarphi'}, and Cauchy-Schwarz inequality,
\begin{equation*}
- \Delta v + W \cdot \nabla v \geq |W|_{\infty} \varphi' + |F|_{\infty} - \left(\sum_i W_i(x)^2\right)^{1/2} \varphi' \geq |W|_{\infty} \varphi' + |F|_{\infty} - |W|_{\infty} \varphi'  \geq 0.
\end{equation*}
Moreover, we have $v(x) = 0$ on the boundary. Finally, we remark that 
$$ \forall x \in \Omega,\ v(x) \geq |\nabla \phi_0| |x-y| \geq  |\nabla \phi_0| d(x) \geq |\phi_0(x)|,$$
because $\phi_0 \equiv 0$ on the boundary. So $v$ is a supersolution to the parabolic problem \eqref{eq:parabolicphi} then by the comparison principle, we have
$$ \forall x \in \Omega,\   |\phi(t,x)| \leq v(x) = \varphi((x-y) \cdot \nu(y)).$$
We then deduce the expected bound \eqref{eq:ponctboundphivarphiPar} by minimizing over $y$.\\
\indent \textbf{Step 2: The maximum of $Z_{\varepsilon}$ is attained at $(t_0,x_0,y_0)$ with $x_0 \in \partial \Omega$ or $y_0 \in \partial \Omega$.}  If for instance $x_0 \in \partial\Omega$, we have $\phi(t_0,x_0) = 0$ and by using \eqref{eq:ponctboundphivarphiPar}
\begin{align*}
Z_{\varepsilon}(t_0,x_0,y_0)& = \phi(t_0,y_0) - \phi(t_0,x_0) - 2 \varphi\left(\frac{|y_0-x_0|}{2}\right) - \varepsilon e^t\\
& \leq \varphi(d(y_0)) - 2 \varphi\left(\frac{|y_0-x_0|}{2}\right)\\
& \leq \varphi(|y_0-x_0|) - 2 \varphi\left(\frac{|y_0-x_0|}{2}\right)\\
& \leq 0.
\end{align*}
Here, we have used the fact that $\varphi$ is increasing and concave by \eqref{eq:ellipticvarphi}.
\end{proof}
The proof of \Cref{th:mainresult2} for any bounded potential $V \in L^{\infty}(\Omega;\R)$ and $W \in L^{\infty}(\Omega;\R^N)$ can be reduced to the case $V = 0$, first by multipliying by $\exp(-t |V|_{\infty})$ (to reduce to the case $V \geq 0$) then by using the existence of a positive (elliptic) multiplier stated in \Cref{lem:multiplier}, see the end of the proof of \Cref{th:mainresult1}.
\begin{rmk}
\label{rmk:multiplierdt}
Actually, as mentioned before, we strongly believe that \Cref{th:mainresult2} is also true for $W \in L^{\infty}(Q_T;\R^N)$ and $V \in L^{\infty}(Q_T;\R)$. In order to obtain this result, the good strategy seems to first reduce to the case $V \geq 0$ then construct a positive multiplier satisfying
\begin{align}
\label{eq:parabolicpsi} \partial_t \psi - \Delta \psi + W \cdot \nabla \psi + V \psi = 0,\text{in}\ Q_T,&\\
\label{eq:ponctualpsiPar} \exp(-C \diam(\Omega)(|W|_{\infty} + |V|_{\infty}^{1/2})) \leq \psi \leq \exp(C \diam(\Omega) (|W| _{\infty} + |V|_{\infty}^{1/2}))\ \text{in}\ Q_T,&\\
 \label{eq:ponctuallogpsiPar}\norme{\nabla \log(\psi)}_{L^{\infty}(Q_T)} \leq C (|W| _{\infty} + |V|_{\infty}^{1/2}).&
\end{align}
\end{rmk}

\section{Weak quantitative unique continuation results for elliptic equations}
\label{sec:weakquantitativeelliptic}

\indent By looking at the proof of \Cref{th:LandisOneD}, it is natural to discuss the implications of \Cref{th:mainresult1} to the Landis conjecture. We can establish new weak quantitative unique continuation in $L^1$ for elliptic operators $\mathcal{L}u =  - \Delta u - \nabla \cdot (W u) + V u$ with $W \in L^{\infty}(\R^N;\R^N)$, $V \in L^{\infty}(\R^N;\R^{+})$.

We have the following result.
\begin{prop}
\label{cor:Landis}
Let $W \in L^{\infty}(\R^N;\R^N)$, $V \in L^{\infty}(\R^N;\R^+)$ and $u$ be a (smooth) real valued solution of the following elliptic equation
\begin{equation}
\label{eq:ellipticu}
\begin{array}{l l}
-\Delta u - \nabla \cdot (W u) + V u = 0 &\mathrm{in}\ \R^N.
\end{array}
\end{equation}
Then there exists a universal positive constant $C>0$ such that 
\begin{equation}
\label{eq:quantitativeL1}
\forall R > 0,\ \int_{|x| < R} |u(x)| dx \leq \exp\Bigg(C\Big(1 + \norme{W}_{L^{\infty}} + \norme{V}_{L^{\infty}}^{1/2}\Big) R \Bigg) \int_{R<|x|< 2 R} |u(x)| d x.
\end{equation}
\end{prop}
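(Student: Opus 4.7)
The plan is to run the multi-dimensional analogue of the duality argument used in the proof of \Cref{th:LandisOneD}: pair $u$ against a test function $\phi$ solving the formal adjoint equation $\mathcal{L}^*\phi := -\Delta\phi + W\cdot\nabla\phi + V\phi = \sigma$ on a ball, and invoke \Cref{th:mainresult1} to quantify the growth of $\phi$ and $\nabla\phi$. The essential observation is that $\mathcal{L}^*$ is precisely the operator for which \Cref{th:mainresult1} provides an exponential gradient estimate, and the sign condition $V\ge 0$ needed there is given in the hypothesis.

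For a fixed $R>0$, set $\Omega := B(0,2R)$ and pick a smooth radial cutoff $\eta$ with $\eta\equiv 1$ on $B(0,R)$, $\mathrm{supp}\,\eta\subset B(0,2R)$, and bounds $|\nabla\eta|\le C/R$, $|\Delta\eta|\le C/R^2$. Let $\sigma\in L^\infty(\Omega)$ be a measurable selection with $|\sigma|\le 1$ and $\sigma\,\eta u = |\eta u|$ almost everywhere, and solve the Dirichlet problem $\mathcal{L}^*\phi = \sigma$ in $\Omega$, $\phi = 0$ on $\partial\Omega$. Since $\Omega$ is a smooth strictly convex domain with $\diam(\Omega)=4R$, \Cref{th:mainresult1} yields
\begin{equation*}
\norme{\nabla\phi}_{L^\infty(\Omega)} \le E := \exp\!\bigl(4CR(1+\norme{W}_{L^\infty}+\norme{V}_{L^\infty}^{1/2})\bigr),
\end{equation*}
and the Dirichlet boundary condition gives $\norme{\phi}_{L^\infty(\Omega)} \le 4R\,E$.

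Set $\tilde u := \eta u$. Because both $\tilde u$ and $\phi$ vanish on $\partial\Omega$, Green's identity gives $\int_\Omega \tilde u\,\mathcal{L}^*\phi = \int_\Omega \phi\,\mathcal{L}\tilde u$. Since $\mathcal{L}u = 0$ in $\R^N$, the right-hand side reduces to $\int_\Omega \phi\,[\mathcal{L},\eta]u$ with
\begin{equation*}
[\mathcal{L},\eta]u = -2\nabla\eta\cdot\nabla u - u\,\Delta\eta - Wu\cdot\nabla\eta,
\end{equation*}
and one further integration by parts on the $\phi\,\nabla u$ term (the boundary contribution vanishes because $\phi = 0$ on $\partial\Omega$) converts everything into
\begin{equation*}
\int_{B(0,R)}|u| \;\le\; \int_\Omega |\tilde u| \;=\; \int_{R\le|x|\le 2R} u\,\bigl(2\nabla\phi\cdot\nabla\eta + \phi\,\Delta\eta - \phi\,W\cdot\nabla\eta\bigr),
\end{equation*}
which is automatically localised to the annulus because $\nabla\eta$ and $\Delta\eta$ are.

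The bracketed integrand is bounded pointwise by $\norme{\nabla\phi}_{L^\infty}|\nabla\eta| + \norme{\phi}_{L^\infty}(|\Delta\eta|+\norme{W}_{L^\infty}|\nabla\eta|)\lesssim E(1/R+\norme{W}_{L^\infty})$, and for $R\gtrsim 1$ this prefactor is absorbed into a constant of the form $\exp(C'(1+\norme{W}_{L^\infty}+\norme{V}_{L^\infty}^{1/2})R)$, producing \eqref{eq:quantitativeL1}; the very small $R$ regime is handled separately by a direct continuity argument using the smoothness of $u$. The main technical obstacle is justifying the two integrations by parts rigorously, given that $\sigma$ is only $L^\infty$, so $\phi$ lies only in $W_0^{1,\infty}\cap W^{2,p}_{\mathrm{loc}}$ for every $p<\infty$; this should follow from a standard approximation procedure on $\sigma$ combined with the \emph{a priori} estimates already used.
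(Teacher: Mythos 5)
Your proposal is correct and takes essentially the same route as the paper: solve the adjoint Dirichlet problem $-\Delta\phi+W\cdot\nabla\phi+V\phi=\sign(u)$ on the ball $B(0,2R)$, invoke \Cref{th:mainresult1} for the exponential $W^{1,\infty}$ bound, and pair with $u$ against a cutoff so that only annulus terms survive after integration by parts. The only differences are cosmetic: you write out the commutator $[\mathcal{L},\eta]u$ and the regularity/approximation step explicitly, and you flag the $1/R$ cutoff prefactor in the small-$R$ regime, which the paper's proof simply absorbs into its universal constant without comment.
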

Before continuing, let us make some comments on \Cref{cor:Landis}.
\begin{itemize}
\item 
Note that \Cref{cor:Landis} directly implies the qualitative Landis conjecture for solution $u$ verifying \eqref{eq:ellipticu} and satisfying the exponential decay estimate 
\begin{equation}
\label{eq:expdecay}
\exists C, \varepsilon > 0,\ \forall x \in \R^N,\ |u(x)| \leq C \exp(-|x|^{1+\varepsilon}).
\end{equation}
Indeed, in this case, \eqref{eq:expdecay} implies that the right hand side term of \eqref{eq:quantitativeL1} goes to $0$ as $R \rightarrow + \infty$, so
\begin{equation*}
u \equiv 0.
\end{equation*} 
This result was already known from \cite{Ros18} or \cite{ABG17} because the generalized principal eigenvalue of the elliptic operator $\mathcal{L} u = -\Delta u - \nabla \cdot (W u) + V u$ is positive. Indeed, the adjoint operator of $\mathcal{L}$ satisfies the maximum principle so the generalized principal eigenvalue is positive: $\lambda_1(\mathcal{L}) = \lambda_1(\mathcal{L}^*) >0$. The Landis conjecture is still open nowadays without sign condition on the real-valued potential $V$, see \Cref{conj:Landis}.
\item 
Note that \Cref{cor:Landis} extends, in a (very) weak sense, quantitative unique continuation \eqref{eq:quantitativeContinuation} from \cite{KSW15} to any spatial dimension $N \geq 1$. Indeed, for a non-trivial solution $u$ to \eqref{eq:ellipticu}, we can assume for instance that $\int_{|x| < 1} |u(x)| dx = 1$ so \eqref{eq:quantitativeL1} implies
$$ \forall R \geq 1,\ \sup_{R<|x|< 2 R} |u(x)| \geq \exp (- C R),$$
for a positive constant $C$ depending on $N$, $\norme{W}_{\infty}$, $\norme{V}_{\infty}$.
\end{itemize}

We prove \Cref{cor:Landis} as a consequence of \Cref{th:mainresult1} and the crucial idea of the proof of \Cref{th:LandisOneD}.

\begin{proof}
\indent For $R>0$, we denote by $\phi_R$ the solution to \eqref{eq:ellipticphi}, posed on the ball $\Omega=B_0(2R)$, with homogeneous Dirichlet boundary conditions and $F = \sign(u)$. This elliptic problem is well-posed according to \cite[Chapter 8, Section 8.2]{GT83}.\\
\indent Then, from \Cref{th:mainresult1}, we deduce the following exponential growth for $\phi$, 
\begin{equation}
\label{eq:expgrowthProof}
\forall |x| \leq 2R,\ |\nabla \phi_R(x)| \leq \exp\Bigg(C\Big(1 + \norme{W}_{L^{\infty}} + \norme{V}_{L^{\infty}}^{1/2}\Big) R \Bigg).
\end{equation}
Let $\chi \in C_c^{\infty}(B_0(2R))$ be a cut-off function such that $\chi \equiv 1$ in $B_0(R)$ and $\chi \equiv 0$ for $|x| > 3/2 R$. By multiplying \eqref{eq:ellipticu} by $\chi \phi_R$ then integrating in $B_0(2R)$ and using integration by parts, we find
$$\int_{B_0(2R)} \chi |u| \leq C \norme{\phi_R}_{W_0^{1, \infty}(B_0(2R))} \int_{R < |x|< 2R}  |u|,$$ which along with \eqref{eq:expgrowthProof} gives \eqref{eq:quantitativeL1}.
\end{proof}
We can also prove the following result.
\begin{prop}
\label{cor:LandisBis}
Let $W,V,u$ be as in \Cref{cor:Landis}.
Then there exists a universal positive constant $C>0$ such that 
\begin{equation}
\label{eq:quantitativeL1Bis}
\forall R > 0,\ \int_{|x| < R} |u(x)| dx \leq \exp\Bigg(C\Big(1 + \norme{W}_{L^{\infty}} \Big) R \Bigg) \int_{|\sigma|=R} |u(\sigma)| d \sigma.
\end{equation}
\end{prop}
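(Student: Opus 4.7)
The plan is to mimic the duality argument that proves \Cref{cor:Landis}, but without introducing any cut-off function, so that the integration-by-parts boundary term produces exactly the desired surface integral on $\{|\sigma|=R\}$ instead of an annular volume integral.

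For a fixed $R>0$, I would let $\phi_R$ be the solution to the adjoint elliptic problem on the ball $B(0,R)$,
\begin{equation*}
\left\{\begin{array}{l l}
-\Delta \phi_R + W \cdot \nabla \phi_R + V \phi_R = \sign(u) &\mathrm{in}\ B(0,R),\\
\phi_R = 0 &\mathrm{on}\ \partial B(0,R),
\end{array}\right.
\end{equation*}
which is well-posed thanks to the sign condition $V \geq 0$. Since $\diam(B(0,R))=2R$ and $\norme{\sign(u)}_{L^{\infty}}=1$, applying \Cref{th:mainresult1} yields the pointwise gradient bound
\begin{equation*}
\norme{\nabla \phi_R}_{L^{\infty}(B(0,R))} \leq \exp\!\Big(C\big(1+\norme{W}_{L^{\infty}}+\norme{V}_{L^{\infty}}^{1/2}\big)R\Big),
\end{equation*}
which in particular controls $|\partial_\nu \phi_R|$ uniformly on the sphere $\partial B(0,R)$.

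Next, I would multiply \eqref{eq:ellipticu} by $\phi_R$ and integrate over $B(0,R)$. Two integrations by parts in the $-\Delta u$ term leave only the boundary contribution $\int_{\partial B(0,R)} u\,\partial_\nu \phi_R\,d\sigma$ (the reciprocal term in $\partial_\nu u$ vanishes since $\phi_R|_{\partial B(0,R)}=0$); one integration by parts in the divergence-form term $-\nabla\cdot(Wu)$ yields $\int_{B(0,R)} uW\cdot\nabla\phi_R\,dx$ with no boundary contribution (again using $\phi_R|_{\partial B(0,R)}=0$). Regrouping the interior integrals and substituting the equation satisfied by $\phi_R$, everything collapses into the clean identity
\begin{equation*}
\int_{B(0,R)} |u|\,dx \;=\; -\int_{\partial B(0,R)} u\,\partial_\nu \phi_R\,d\sigma,
\end{equation*}
and bounding the right-hand side by $\norme{\nabla \phi_R}_{L^{\infty}(B(0,R))}\int_{|\sigma|=R}|u(\sigma)|\,d\sigma$ delivers \eqref{eq:quantitativeL1Bis} with the same exponential prefactor as in \Cref{cor:Landis}.

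I do not foresee any substantive obstacle: the only technicality is the standard $W^{2,p}$ elliptic regularity for $\phi_R$ (and smoothness of $u$) needed to justify the integration by parts, which is the same as in the proof of \Cref{cor:Landis}. The key structural point is that dropping the cut-off converts the annular volume integral of $|u|$ into a surface integral on the sphere, while the exponential gradient bound from \Cref{th:mainresult1} still applies on the smaller ball $B(0,R)$ of diameter $2R$.
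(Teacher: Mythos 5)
Your duality identity is fine and matches the paper's: multiplying \eqref{eq:ellipticu} by $\phi_R$ (equivalently, multiplying the equation for $\phi_R$ by $u$), integrating over $B(0,R)$ and using $\phi_R=0$ on the sphere does reduce everything to $\int_{|x|<R}|u| = -\int_{|\sigma|=R} u\,\partial_\nu\phi_R\,d\sigma$. The gap is in how you bound $\partial_\nu\phi_R$. You invoke \Cref{th:mainresult1}, which gives $\norme{\nabla\phi_R}_{\infty}\leq \exp\big(C\big(1+\norme{W}_{L^{\infty}}+\norme{V}_{L^{\infty}}^{1/2}\big)R\big)$; plugging this in proves only the estimate with the \emph{same} $V$-dependent prefactor as in \Cref{cor:Landis}, not the claimed bound \eqref{eq:quantitativeL1Bis}, whose exponent is $C\big(1+\norme{W}_{L^{\infty}}\big)R$ with no dependence on $\norme{V}_{L^{\infty}}$. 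That independence of $V$ is precisely the point of \Cref{cor:LandisBis} (the paper stresses it right after the statement), so your argument proves a strictly weaker inequality than the one asserted.

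The paper gets the $V$-free bound \eqref{eq:expgrowthProofBis} not from \Cref{th:mainresult1} but from an explicit radial barrier: with $a:=\norme{W}_{\infty}+1$, the function $\varphi(r)=\frac{e^{aR}-e^{ar}}{a}$ is nonnegative on $B(0,R)$, vanishes on the sphere, and satisfies $-\Delta\varphi+W\cdot\nabla\varphi+V\varphi\geq 1$ there, the crucial observation being that $V\varphi\geq 0$ because $V\geq 0$ and $\varphi\geq 0$, so the potential only helps and never enters the constant. The comparison principle then gives $-\varphi\leq\phi_R\leq\varphi$, and since $\phi_R$ and $\varphi$ both vanish on $\partial B(0,R)$ this yields $|\partial_\nu\phi_R(\sigma)|\leq -\partial_\nu\varphi(\sigma)=e^{aR}$ on $|\sigma|=R$, which is the $V$-independent normal-derivative estimate needed to conclude. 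To repair your proof you would need to replace the appeal to \Cref{th:mainresult1} by such a barrier (or another argument producing a boundary gradient bound uniform in $V$); note also that only the normal derivative on the sphere is needed, not a global interior gradient bound.
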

Note that \eqref{eq:quantitativeL1Bis} does not depend on $\norme{V}_{L^{\infty}}$.\\
\indent The following proof relies on the same duality argument as before but with another estimate of the function $\phi_R$ on the boundary. This provides a better estimate compared to \Cref{cor:Landis}.
\begin{proof}
By a simple density argument, we can assume without loss of generality that $W \in C(\R^N;\R^N)$.\\
\indent For $R>0$, we denote by $\phi_R$ the solution to \eqref{eq:ellipticphi}, posed on the ball $\Omega=B_0(R)$, with homogeneous Dirichlet boundary conditions and $F = \sign(u)$. This elliptic problem is well-posed according to \cite[Chapter 8, Section 8.2]{GT83}.\\
\indent We claim that we have the following bound
\begin{equation}
\label{eq:expgrowthProofBis}
\forall |\sigma| = R,\ |\partial_{\nu} \phi_R(\sigma)| \leq \exp\Bigg(C\Big(1 + \norme{W}_{L^{\infty}} \Big) R \Bigg).
\end{equation}
Admit \eqref{eq:expgrowthProofBis} for the moment, let us multiply \eqref{eq:ellipticphi} by $u$, then integrate in $\Omega$, so after integration by parts, we obtain
\begin{equation}\label{eq:proofLandisInt}
\int_{|x| < R} |u(x)| dx = - \int_{|\sigma|=R} \partial_{\nu} \phi_R(\sigma) u(\sigma) d \sigma + \int_{|\sigma|=R} \phi_R(\sigma) (Wu)(\sigma) \cdot \nu d \sigma.
\end{equation}
Plugging the bound \eqref{eq:expgrowthProof} in the identity \eqref{eq:proofLandisInt}, we obtain
$$\int_{|x| < R} |u(x)| dx\leq \exp\Bigg(C\Big(1 + \norme{W}_{L^{\infty}} \Big) R \Bigg) \int_{|\sigma|=R} | u(\sigma) | d \sigma.$$
This is exactly the expected bound \eqref{eq:quantitativeL1Bis}.\\
\indent To prove the estimate \eqref{eq:expgrowthProofBis}, we exhibit a supersolution to \eqref{eq:ellipticphi}. For some $a>0$, let $\varphi$ be the following radial function 
\begin{equation*}
\varphi(x) = \varphi(r) := \frac{e^{aR} - e^{ar}}{a},\ \text{where}\ r := |x|.
\end{equation*}
Then for $N \geq 2$, we have
\begin{equation*}
\varphi'(r) = - e^{ar},\ \varphi''(r) = -a e^{ar},\ \nabla \varphi = - e^{ar} \frac{x}{r},\ \Delta \varphi = - e^{ar} \left(a + \frac{N-1}{r}\right).
\end{equation*}
So 
\begin{equation}
\label{eq:supersolutionvarphi}
-\Delta \varphi + W \cdot \nabla \varphi + V \varphi = e^{ar} \left(a +  \frac{N-1}{r} + W \cdot \frac{x}{r} \right) + V \frac{e^{aR} - e^{ar}}{a}.
\end{equation}
So $\varphi$ is a supersolution to \eqref{eq:ellipticphi} with $F= \sign(u)$ if the right hand side of \eqref{eq:supersolutionvarphi} is bigger than $1$, which is the case for $a \geq - W \cdot \frac{x}{r} + e^{-ar}$ hence for $a := \norme{W}_{\infty} + 1$.\\
\indent We deduce that $\phi \leq \varphi$. By using the homogeneous Dirichlet boundary conditions for $\phi$ and $\varphi$, we deduce that for $|\sigma| = R$, we have
$$ \phi \leq \varphi \Rightarrow \partial_{\nu} \varphi(\sigma) \leq \partial_{\nu} \phi(\sigma) \Rightarrow - \partial_{\nu} \phi(\sigma) \leq - \partial_{\nu} \varphi(\sigma) = e^{aR}.$$
By the same kind of computations, we show that $\phi \geq - \varphi$ to get $\partial_{\nu} \phi(\sigma) \leq -  \partial_{\nu} \varphi(\sigma) = e^{aR} $. Therefore, we have \eqref{eq:expgrowthProofBis}.
\end{proof}

\appendix

\section{Proof of the gradient log estimate for the multiplier}
\label{sec:appendixa}

This section is devoted to the proof of \Cref{lem:kenig}. Observe that \Cref{lem:kenig} is similar to \cite[Lemma 2.2]{KSW15} but the proof is different. It has been communicated by Carlos Kenig.
\begin{proof}[Proof of \Cref{lem:kenig}]
Recalling \eqref{eq:ellipticpsi}, we have
\begin{equation}
\label{eq:GammaRtildeV}
-\Delta \tilde{\psi} + \tilde{W} \cdot \nabla \tilde{\psi} +  \tilde{V}  \tilde{\psi}= 0\ \text{in}\ B_0(2),
\end{equation}
where $\tilde{W} = R W(Rx)$ and $\tilde{V} = R^2 V(Rx)$. Observe that 
\begin{equation}
\label{eq:estimationtildeW}
\forall x \in B_2,\ |\tilde{W}(x)| \leq R K,
\end{equation}
and
\begin{equation}
\label{eq:estimationtildeV}
\forall x \in B_2,\ |\tilde{V}(x)| \leq R^2 M.
\end{equation}
In the proof, the positive constants $C>0$ only depend on the spatial dimension $N$ and can vary from line to another. We split the proof in two steps.\\
\indent \textbf{Step 1: $L^{\infty}$- gradient estimate on a small ball.}\\
For every $x_0 \in B_0(2)$ and $r>0$ sufficiently small, we have
\begin{equation}
\label{eq:gradsmallpsi}
\norme{\nabla \tilde{\psi}}_{L^{\infty}(B_{x_0}(r))} \leq \frac{C}{r} \norme{\tilde{\psi}}_{L^{\infty}(B_{x_0}(2r))} + C \norme{\tilde{V}}_{\infty} r \norme{\tilde{\psi}}_{L^{\infty}(B_{x_0}(2r))}.
\end{equation}
\indent To obtain \eqref{eq:gradsmallpsi}, we proceed as follows. We use \cite[Chapter 3, Section 3.4, Theorem 3.9]{GT83} with $\Omega = B_{x_0}(2r)$, $f = \tilde{W} \cdot \nabla \tilde{\psi} +  \tilde{V}  \tilde{\psi}$, $d_x = d(x, B_{x_0}(2r))$,
\begin{align*}
\sup_{x \in B_{x_0}(2r)} d_x  |\nabla \tilde{\psi}(x)| \leq C \Bigg( \sup_{x \in B_{x_0}(2r)} |\tilde{\psi}(x)| + \sup_{x \in B_{x_0}(2r)} d_x ^2\Big( |\tilde{W}(x)|  |\nabla \tilde{\psi}(x)| + |\tilde{V}(x)|  | \tilde{\psi}(x)|\Big)\Bigg).
\end{align*}
So by taking $r$ sufficiently small, i.e. $r \leq \norme{\tilde{W}}_{\infty}/(2C)$, we obtain
\begin{equation}
\label{eq:gt83poisson}
\sup_{x \in B_{x_0}(2r)} d_x  |\nabla \tilde{\psi}(x)| \leq C \Bigg( \sup_{x \in B_{x_0}(2r)} |\tilde{\psi}(x)| + \norme{\tilde{V}}_{\infty} r^2 \sup_{x \in B_{x_0}(2r)} | \tilde{\psi}(x)|\Bigg).
\end{equation}
Moreover, we have 
\begin{equation}
\label{eq:smallballbigball}
\norme{\nabla \tilde{\psi}}_{L^{\infty}(B_{x_0}(r))} \leq \sup_{x \in B_{x_0}(r)} \frac{d_x}{d_x}  |\nabla \tilde{\psi}(x)|  \leq \frac{1}{r} \sup_{x \in B_{x_0}(r)} d_x  |\nabla \tilde{\psi}(x)| \leq \frac{1}{r} \sup_{x \in B_{x_0}(2r)} d_x  |\nabla \tilde{\psi}(x)|.
\end{equation}
Then, plugging \eqref{eq:smallballbigball} in \eqref{eq:gt83poisson}, we get \eqref{eq:gradsmallpsi}.

\indent \textbf{Step 2: Harnack's inequalities.}\\
Recalling that $\tilde{\psi}$ is nonnegative, we can use Harnack's inequality \cite[Chapter 8, Section 8.8, Theorem 8,20]{GT83} to obtain that 
\begin{equation}
\label{eq:harnackinequalityprecise}
\norme{\tilde{\psi}}_{L^{\infty}(B_{x_0}(2r))} \leq C \tilde{\psi}(x_0) \leq C \tilde{\psi}(x),\ \forall x \in B_{x_0}(r),
\end{equation}
for
\begin{equation}
\label{eq:smallr}
r = \frac{1}{C\left(\norme{\tilde{W}}_{\infty} + \norme{\tilde{V}}_{\infty}^{1/2}\right)}.
\end{equation}
So by putting together \eqref{eq:gradsmallpsi} and \eqref{eq:harnackinequalityprecise} for $r$ as in \eqref{eq:smallr}, we obtain that 
\begin{equation}
\label{eq:gradsmallpsiFinal}
\norme{\nabla \tilde{\psi}}_{L^{\infty}(B_{x_0}(r))} \leq C\left(\norme{\tilde{W}}_{\infty} + \norme{\tilde{V}}_{\infty}^{1/2}\right)\tilde{\psi}(x),\ \forall x \in B_{x_0}(r),
\end{equation}
so 
\begin{equation*}
\label{eq:gradsmallpsiFinalBis}
\norme{\frac{\nabla \tilde{\psi}}{\tilde{\psi}}}_{L^{\infty}(B_{x_0}(r))} \leq C\left(\norme{\tilde{W}}_{\infty} + \norme{\tilde{V}}_{\infty}^{1/2}\right).
\end{equation*}
Recalling that $x_0$ is taken arbitrary in $B_0(2)$, this is exactly the expected bound \eqref{eq:lemkenig} recalling \eqref{eq:estimationtildeW} and \eqref{eq:estimationtildeV}.
\end{proof}

\paragraph*{Acknowledgments.} The author deeply thanks Michel Pierre for many reasons: first, for communicating the proof of \Cref{th:LandisOneD}, which inspires this work, then for reading and making comments on a preliminary draft of this paper. The author also thanks Blair Davey and Carlos Kenig for stimulating discussions around \Cref{lem:kenig}.

\bibliographystyle{alpha}
\bibliography{Landis}

\end{document}